\tikzstyle{overbrace text style}=[above, pos=.5, yshift=3mm]
\tikzstyle{overbrace style}=[decorate,decoration={brace,raise=2mm ,amplitude=3pt}]
\newtheorem{theorem}{Theorem}[section]
\newtheorem{definition}{Definition}[section]
\newtheorem{lemma}[theorem]{Lemma}
\newtheorem{corollary}[theorem]{Corollary}
\newtheorem{open problem}{Open Problem}
\newtheorem{observation}{Observation}
\newcommand{\G}{\mbox{$\cal G$}}
\newcommand{\T}{\mbox{$\cal T$}}
\newcommand{\cp}{\mbox{$\cal P$}}
\newcommand{\B}{\mbox{$\cal B$}}
\title{Asymptotically sharpening the $s$-Hamiltonian index bound}
\author{Sulin Song\affiliationmark{1}
\and Lan Lei\affiliationmark{2}\thanks{This project is supported in part  by General Project of Natural Science Foundation of Chongqing, China (No. cstc2019jcyj-msxmX0579).}
\and Yehong Shao\affiliationmark{3}
\and Hong-Jian Lai\affiliationmark{1}\thanks{This project is supported in part by Natural Science Foundation of China grant (Nos.11771039, 11771443).}
}
\affiliation{
  Department of Mathematics, West Virginia University, Morgantown, WV 26506, USA.\\
  School of Mathematics and Statistics, Chongqing Technology and Business University, Chongqing 400067, P.R. China.\\
  Arts and Science, Ohio University Southern, Ironton, OH 45638, USA.}
\keywords{$s$-Hamiltonian; $(s,t)$-supereulerian; Collapsible graphs; $k$-triangular;  Line graph stable properties}  
\begin{document}
\publicationdetails{24}{2022}{1}{22}{8484}
\maketitle

\begin{abstract}
For a non-negative integer $s\le |V(G)|-3$, a graph $G$ is $s$-Hamiltonian if the removal of any $k\le s$ vertices results in a Hamiltonian graph.
Given a connected simple graph $G$ that is not isomorphic to a path, a cycle, or a $K_{1,3}$, let $\delta(G)$ denote the minimum degree of $G$,
let $h_s(G)$ denote the smallest integer $i$ such that
the iterated line graph $L^{i}(G)$ is $s$-Hamiltonian, and let $\ell(G)$ denote the length of the longest non-closed path $P$ in which all internal vertices have degree 2 such that $P$ is not both of length 2 and in a $K_3$.
For a simple graph $G$, we establish better upper bounds for $h_s(G)$ as follows.
\begin{equation*}
h_s(G)\le \left\{
\begin{aligned}
& \ell(G)+1, &&\mbox{ if }\delta(G)\le 2 \mbox{ and }s=0;\\
& \widetilde d(G)+2+\lceil \lg (s+1)\rceil, &&\mbox{ if }\delta(G)\le 2  \mbox{ and }s\ge 1;\\
& 2+\left\lceil\lg\frac{s+1}{\delta(G)-2}\right\rceil, && \mbox{ if } 3\le\delta(G)\le s+2;\\
& 2, &&{\rm otherwise},
\end{aligned}
\right.
\end{equation*}
where $\widetilde d(G)$ is the smallest integer $i$ such that $\delta(L^i(G))\ge 3$.
Consequently, when $s \ge 6$, this new upper bound for the $s$-hamiltonian index
implies that $h_s(G) = o(\ell(G)+s+1)$
as $s \to \infty$. This sharpens the result, $h_s(G)\le\ell(G)+s+1$, obtained by Zhang et al. in [Discrete Math., 308 (2008) 4779-4785].
\end{abstract}


\section{Introduction}

Finite loopless graphs permitting parallel edges are considered with undefined terms being referenced to \cite{BoMu08}.
As in \cite{BoMu08}, a simple graph is one that is loopless and without parallel edges, and the minimum degree of a graph $G$ is denoted by $\delta(G)$.
For a subset $X \subseteq V(G)$ or $E(G)$, let $G[X]$ denote the subgraph induced by $X$,
and let $G - X = G[V(G) - X]$ or $G[E(G) - X]$, respectively.
When $X=\{x\}$, we write $G-x$ for $G-\{x\}$.
Throughout this paper, if $X\subseteq E(G)$, then, for notational convenience, we often use $X$ to denote both the edge subset of $E(G)$ and $G[X]$.
We also use $\lg x$ as an alternative notation for $\log_2 x$,
and set $[m,n]=\{m,m+1,\ldots, n\}$ for two integers $m,n$ with $m\le n$.

A graph is considered \emph{Hamiltonian} if it has a spanning cycle.
For a non-negative integer $s\le |V(G)|-3$, a graph is called \emph{$s$-Hamiltonian} if the removal of any $k\le s$ vertices results in a Hamiltonian graph.
A subgraph $H$ of $G$ is \emph{dominating} if $G-V(H)$ is edgeless.
Following \cite{BoST77, Catl92}, a graph is \emph{supereulerian} if it has a spanning closed trail.
Harary and Nash-Williams \cite{HaNa65} characterized Hamiltonian line graphs as follows, which implies that the line graph of every supereulerian graph is Hamiltonian.

\begin{theorem}[Harary and Nash-Williams, Proposition 8 of \cite{HaNa65}]
\label{hanathm}
Let $G$ be a graph with at least three edges. Then
$L(G)$ is Hamiltonian if and only if $G$ has a dominating closed trail.
\end{theorem}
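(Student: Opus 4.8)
The plan is to prove both implications by exploiting the dictionary between the two structures: the vertices of $L(G)$ are the edges of $G$, a Hamiltonian cycle of $L(G)$ is a cyclic enumeration $e_1,e_2,\ldots,e_m,e_1$ of all of $E(G)$ in which consecutive edges share an endpoint in $G$, and a dominating closed trail (DCT) of $G$ is a closed trail $T$ with $G-V(T)$ edgeless. I would first dispose of the degenerate case in which $G$ is a star $K_{1,m}$ ($m\ge 3$): here $L(G)\cong K_m$ is Hamiltonian, and the single center vertex serves as a trivial, edgeless dominating closed trail, so the equivalence holds. This lets me assume in both directions that any closed trail under consideration may be taken to contain an edge.

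For the forward direction, suppose $L(G)$ has a Hamiltonian cycle, giving the cyclic sequence $e_1,\ldots,e_m$. For each index $i$ (read modulo $m$) I choose a vertex $x_i\in V(G)$ common to the consecutive edges $e_i$ and $e_{i+1}$. Each edge $e_i$ then carries two contact vertices $x_{i-1}$ and $x_i$, and when $x_{i-1}\ne x_i$ these must be exactly the two endpoints of $e_i$, so $e_i$ joins them. I would then read off the closed walk $x_0=x_m,x_1,\ldots,x_m$, traversing the edge $e_i$ precisely when $x_{i-1}\ne x_i$. Because the $e_i$ are pairwise distinct, no edge is repeated, so this walk is a closed trail $T$. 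Domination is immediate: every $e_j$ that is skipped satisfies $x_{j-1}=x_j\in V(T)$, so $e_j$ meets $V(T)$, while every traversed edge already lies on $T$; hence $G-V(T)$ is edgeless.

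For the reverse direction, I start from a DCT $T=v_0e_1v_1\cdots e_kv_k$ with $v_k=v_0$. The trail edges already form a closed walk $e_1,e_2,\ldots,e_k,e_1$ in $L(G)$, and I would enlarge it to a Hamiltonian cycle by splicing in the non-trail edges. Using domination, I assign each edge $f\notin E(T)$ to a fixed occurrence of one of its endpoints on $T$; let $F_i$ be the set assigned to the occurrence $v_i$. Since every edge in $F_i\cup\{e_i,e_{i+1}\}$ contains $v_i$, this set induces a clique in $L(G)$, so I can route the cycle $\cdots e_i\to(\text{the edges of }F_i\text{ in any order})\to e_{i+1}\cdots$. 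Consecutive vertices of the resulting cyclic sequence always share a vertex of $G$, and each edge of $G$ appears exactly once, yielding a Hamiltonian cycle of $L(G)$.

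The routine part is the clique-insertion bookkeeping; the main subtleties I would watch are the degenerate star case noted above, which is the only place the hypothesis of at least three edges is needed, and the possibility that a vertex recurs on the trail or that $G$ has parallel edges, which forces me to fix one occurrence per assigned edge so that the inserted blocks $F_i$ stay disjoint and the trail property is preserved. Once these are pinned down, both directions close.
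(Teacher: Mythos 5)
The paper offers no proof of this statement to compare against: Theorem \ref{hanathm} is quoted from Harary and Nash-Williams as a known tool, so your proposal must be judged on its own. It is correct, and it is the classical argument for this theorem. In the forward direction, your contact-vertex walk $x_0,x_1,\ldots,x_m$ (with $e_i$ traversed exactly when $x_{i-1}\ne x_i$) is a closed trail because the traversed edges are distinct, and every skipped $e_j$ is dominated through $x_{j-1}=x_j\in V(T)$; note also that if all $x_i$ coincide you land exactly in the trivial-trail case, which is consistent with your star reduction, since all edges then share one vertex. In the reverse direction, the splicing of the cliques $F_i\cup\{e_i,e_{i+1}\}$ at fixed occurrences of $v_i$ is the standard construction, and your insistence on one occurrence per assigned edge is precisely what keeps the blocks disjoint when the trail revisits vertices. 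Two minor caveats, neither affecting correctness: since this paper's graphs permit parallel edges, the degenerate configurations admitting only a trivial (single-vertex) dominating closed trail are multistars rather than just $K_{1,m}$, though your clique argument covers these verbatim; and the hypothesis of at least three edges is not needed \emph{only} in the star case --- it is also what guarantees in the reverse direction that the spliced cyclic sequence has length at least $3$ and is therefore a genuine cycle of the simple graph $L(G)$ (with two edges, e.g.\ a closed trail on a pair of parallel edges, the construction would yield a closed sequence of length $2$, and indeed $L(G)\cong K_2$ is not Hamiltonian). Your proof as written already assumes this hypothesis globally, so both directions close.
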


The \emph{line graph} of a graph $G$, denoted $L(G)$, is a simple graph with $E(G)$ being its
vertex set, where two vertices in $L(G)$ are adjacent whenever the corresponding edges in $G$ are adjacent.
A \emph{claw-free} graph is one that does not have an induced subgraph isomorphic to $K_{1,3}$.
Beineke \cite{Bein68} and Robertson (Page 74 of \cite{Ha69})
showed that line graphs are claw-free graphs.
For a positive integer $i$, we define $L^0(G)=G$, and the $i$th iterated line
graph of $G$, denoted $L^i(G)$, is defined recursively as $L^{i}(G) =
L(L^{i-1}(G))$.

Let $J_1$ and $J_2$ be two graphs obtained from $K_{1,3}$ via identifying two and three vertices of degree 1, respectively. Let $K_{1,3}^+=\{J_1,J_2,K_{1,3}\}$.
Since the line graph of a cycle remains unchanged, in general, we assume that graphs
are not isomorphic to paths, cycles or any members in $K_{1,3}^+$. For this reason, we define
\[\G = \{G: G\mbox{ is connected and is not isomorphic to a path, or a cycle, or a member in }K^+_{1,3}\}.\]
Chartrand in \cite{Char68} introduced and studied the Hamiltonian index of a graph,
and initiated the study of indices of graphical properties.
More generally, we have the following definition.

\begin{definition}[Definition 5.8 of \cite{LaSh07}]
For a property $\mathcal{P}$, the \emph{$\mathcal{P}$-index} of $G\in\G$ is defined by
\begin{equation*}
 \mathcal{P}(G)=\left\{
\begin{aligned}
&\min\{i: L^i(G)\ {\rm has\ property}\ \mathcal{P}\},\ && {\rm if\ one\ such\ integer\ }i\ {\rm exists};\\
&\infty, &&{\rm otherwise}.
\end{aligned}
\right.
\end{equation*}
\end{definition}

A property $\cp$ is {\bf line graph stable} if $L(G)$ has $\cp$ whenever $G$ has $\cp$.
Chartrand \cite{Char68} showed that for every graph $G\in\G$, the Hamiltonian index exists as a finite number, and
the characterization of Hamiltonian line graphs (Theorem \ref{hanathm}) by Harary and Nash-Williams implies that being Hamiltonian is line graph stable.
Z. Ryj\'{a}\v{c}ek et al. \cite{RyWX11} indicated that determining the value of the Hamiltonian index is difficult.
Clark and Wormald \cite{ClWo83} showed that for all graphs in $\G$, other Hamiltonian-like indices also exist as finite numbers;
and in \cite{LaSh07}, it is shown that these Hamiltonian-like properties are also line graph stable.
Let $h(G), h_s(G)$ and $s(G)$ be the \emph{Hamiltonian index}, \emph{$s$-Hamiltonian index} and \emph{supereulerian index} of $G\in \G$, respectively. By definitions, $h(G)=h_0(G)$.

Let $P=v_0e_1v_1e_2\cdots v_{s-1}e_{s}v_{s}$ be a path of a graph $G$ where each $e_i\in E(G)$ and each $v_i\in V(G)$. Then $P$ is called a $(v_0,v_l)$-path or an $(e_1,e_s)$-path of $G$.
A path $P$ of $G$ is \emph{divalent} if every internal vertex of $P$ has degree 2 in $G$.
For two non-negative integers $s$ and $t$, a divalent path $P$ of $G$ is a
\emph{divalent $(s,t)$-path} if the two end vertices of $P$ have degrees $s$ and $t$, respectively.
A non-closed divalent path $P$ is considered \emph{proper} if $P$ is not both of length 2 and in a $K_{3}$.
As in \cite{LaiH88, ZELS08}, for a graph $G \in \G$, define
\begin{equation} \label{def-ell}
\ell(G)=\max\{m: G\ \mbox{has a length $m$ proper divalent path}\}.
\end{equation}
\begin{theorem}\label{knownthm}
Let $G\in \G$ be a simple graph. Each of the following holds.\\
(i) {\rm(Corollary 6 of \cite{LaiH88})} $s(G)\le \ell(G)$.\\
(ii) {\rm(Corollary 6 of \cite{LaiH88})} $h(G)\le s(G)+1\le \ell(G)+1$.\\
(iii) {\rm(Zhang et al., Theorem 1.1 of \cite{ZELS08})} $h_s(G)\le\ell(G)+s+1$.
\end{theorem}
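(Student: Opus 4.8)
The plan is to treat part (i) as the crux and then obtain (ii) and (iii) from it. I would prove $s(G)\le\ell(G)$ by induction on $\ell(G)$, resting on two observations: a monotonicity of the supereulerian index under the line-graph operator, and the fact that taking a line graph shortens the longest proper divalent path by exactly one.

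First I would record the monotonicity $s(G)\le s(L(G))+1$, which is immediate from the definition: if $L^{j}(L(G))=L^{j+1}(G)$ carries a spanning closed trail with $j=s(L(G))$, then level $j+1$ already certifies supereulerianity of $G$. Next I would prove the \emph{length-reduction lemma}: for $G\in\G$ with $\ell(G)\ge 2$ one has $L(G)\in\G$ and $\ell(L(G))=\ell(G)-1$. This is combinatorial. Through the edge-to-vertex correspondence defining $L(G)$, a proper divalent $(a,b)$-path $v_0e_1v_1\cdots e_mv_m$ of length $m\ge 2$ is carried to the path $e_1e_2\cdots e_m$ of $L(G)$, whose interior vertices $e_2,\dots,e_{m-1}$ each have degree $2$ (their endpoints in $G$ are divalent), yielding a proper divalent path of length $m-1$; conversely, since a degree-$2$ vertex of $L(G)$ arises only from an edge of $G$ with both ends divalent, every divalent path of $L(G)$ comes this way and no longer one is created. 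The only delicate bookkeeping is the $K_3$ exception built into ``proper,'' which I would check case by case. The induction then runs cleanly: for $\ell(G)\ge 2$, $s(G)\le s(L(G))+1\le \ell(L(G))+1=\ell(G)$, applying the induction hypothesis to $L(G)$, which is legitimate because $\ell(L(G))=\ell(G)-1<\ell(G)$.

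This reduces everything to the base case $\ell(G)=1$, where I must show $s(G)\le 1$, i.e.\ that $L(G)$ is supereulerian. The hypothesis $\ell(G)=1$ forces every degree-$2$ vertex of $G$ to sit inside a triangle (its two neighbors are adjacent), so the only degree-$2$ vertices are harmless ``triangular'' ones. I would then invoke Catlin's reduction method: contracting the maximal collapsible subgraphs, the triangular configurations collapse and what remains is a reduced, collapsible, hence supereulerian graph; translating through the line-graph/dominating-trail correspondence produces a spanning closed trail of $L(G)$. This base case is the main obstacle, since it is exactly here that a count of divalent paths must be converted into an honest spanning-Eulerian-subgraph statement, and the triangle exception in the definition of $\ell$ is precisely what makes that conversion go through.

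For (ii), the inequality $s(G)+1\le \ell(G)+1$ is (i), while $h(G)\le s(G)+1$ follows from Theorem \ref{hanathm}: the spanning closed trail of $L^{s(G)}(G)$ is in particular a dominating closed trail, so $L^{s(G)+1}(G)$ is Hamiltonian (the degenerate case of fewer than three edges being handled directly). For (iii) I would induct on $s$, with $s=0$ being (ii). The engine is a lemma of the shape ``if $L^{i}(G)$ is $(s-1)$-Hamiltonian then $L^{i+1}(G)$ is $s$-Hamiltonian,'' i.e.\ each extra line-graph iteration buys one more deletable vertex; granting it, $h_s(G)\le h_{s-1}(G)+1\le\cdots\le h_0(G)+s\le \ell(G)+s+1$. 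Proving this lemma is the second obstacle: it needs an $s$-Hamiltonian analogue of Theorem \ref{hanathm}, translating the deletion of up to $s$ vertices in $L^{i+1}(G)$ into the deletion of a controlled edge set in $L^{i}(G)$ while preserving a dominating closed trail, for which one exploits the growth of (essential) edge-connectivity under iterated line graphs.
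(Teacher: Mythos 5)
First, note that the paper contains no proof of Theorem~\ref{knownthm} at all: parts (i)--(ii) are quoted from Corollary~6 of \cite{LaiH88} and part (iii) from Theorem~1.1 of \cite{ZELS08}, so your attempt must stand on its own merits --- and it rests on two lemmas that are false. The length-reduction lemma $\ell(L(G))=\ell(G)-1$ fails: let $G$ be the cycle $tu_1u_2u_3t$ with a pendant edge $xt$. Then $\ell(G)=3$ (the path $tu_1u_2u_3$), while $L(G)$ is the $4$-cycle $c_0c_1c_2c_3$ (where $c_i$ are the cycle edges) plus a vertex $p$ adjacent to the adjacent pair $c_0,c_3$; the path $c_0c_1c_2c_3$ is a non-closed proper divalent $(3,3)$-path of length $3$, so $\ell(L(G))=3=\ell(G)$. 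Two mechanisms defeat your bookkeeping: a degree-$2$ vertex of $L(G)$ arises not only from an edge with two divalent ends but also from a pendant edge at a degree-$3$ vertex --- exactly the oversight in \cite{KnNi03} that the paper's Lemma~\ref{mg} and Figure~\ref{fig1} were written to correct --- and a \emph{closed} divalent path (a cycle containing exactly one vertex of degree $\ge 3$) does not count toward $\ell(G)$ but opens up in $L(G)$ into a non-closed proper divalent path only one shorter, so $\ell$ can stall instead of dropping. Without strict decrease your induction neither terminates nor yields $s(G)\le\ell(G)$; note also that the paper's Lemma~\ref{(s,t)path} deliberately runs only in the pullback direction and only for paths not in a $K_3$, precisely to avoid these traps. (Your base case is likewise only gestured at: ``reduced, collapsible, hence supereulerian'' does not parse, since a reduced collapsible graph is $K_1$.)

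The engine lemma for (iii) --- ``if $L^i(G)$ is $(s-1)$-Hamiltonian then $L^{i+1}(G)$ is $s$-Hamiltonian'' --- is also false, even for iterated line graphs. Take $G'\in\G$ with vertices $u,v,w_1,w_2,x,y$ and edges $ux,\,uw_1,\,uw_2,\,w_1v,\,w_2v,\,vy$. Then $H=L(G')$ consists of two triangles $\{a_1,a_2,a_3\}$, $\{b_1,b_2,b_3\}$ joined by the two independent edges $a_1b_1$ and $a_2b_2$, and $H$ is Hamiltonian via $a_3a_1b_1b_3b_2a_2a_3$, so $L^1(G')$ is $0$-Hamiltonian. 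But deleting from $L^2(G')=L(H)$ the single vertex corresponding to the edge $a_1b_1$ leaves $L(H-a_1b_1)$, and $H-a_1b_1$ is two triangles joined by one bridge: no closed trail can use the bridge, and neither triangle dominates the other's edges, so $H-a_1b_1$ has no dominating closed trail and, by Theorem~\ref{hanathm}, $L^2(G')$ is not $1$-Hamiltonian. Hence $h_1(G')\ge 3>h_0(G')+1=2$, the chain $h_s(G)\le h_{s-1}(G)+1\le\cdots\le h_0(G)+s$ collapses at its first link, and no ``$s$-Hamiltonian analogue of Theorem~\ref{hanathm}'' can rescue it, because the inequality it is meant to prove is simply untrue. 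A sanity check would have flagged this: were your lemma correct, (iii) would be an immediate corollary of (ii), trivializing both \cite{ZELS08} and this paper's entire refinement via $k$-triangular indices, whose point is that extra iterations are needed (roughly $\lceil\lg(s+1)\rceil$ of them, by Theorem~\ref{kindex}) before deletions of vertices can be absorbed.
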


Several natural questions arise here. Can we improve the upper bounds above? Can we generalize Theorem \ref{knownthm}(i) in the way as Theorem \ref{knownthm}(iii) extends
Theorem \ref{knownthm}(ii)?
As a generalization of supereulerian graphs, given two non-negative integers $s$ and $t$,
it is defined in \cite{LLWL10} that a graph $G$ is \emph{$(s,t)$-supereulerian} if for any disjoint
sets $X,Y\subset E(G)$ with $|X|\leq s$ and $|Y|\leq t$, $G - Y$ contains a spanning closed trail that traverses all edges in $X$.
Former studies on $(s,t)$-supereulerian graphs can be found in \cite{LeLW07, LeLi08, LLWL10}, among others.
Let $i_{s,t}(G)$ denote the \emph{$(s,t)$-supereulerian index} of a graph $G\in\G$. Thus, $i_{0,0}(G)=s(G)$.
By the characterization of Hamiltonian line graphs (Theorem \ref{hanathm}),
the line graph of every $(0,s)$-supereulerian graph is $s$-Hamiltonian, and then we obtain the following observation.

\begin{observation}\label{obs1}
Let $G\in\G$. Then $h_s(G)\le i_{0,s}+1$. In particular, $h(G)\le s(G)+1$.
\end{observation}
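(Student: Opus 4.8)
The plan is to first prove the underlying structural statement that forming the line graph turns the $(0,s)$-supereulerian property into $s$-Hamiltonicity, and then to lift this to iterated line graphs using the definition of the indices involved.

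First I would isolate the combinatorial correspondence between vertex deletion in a line graph and edge deletion in the base graph. Fix a graph $H$ and a set $Y\subseteq E(H)$ with $|Y|\le s$, and let $V_Y\subseteq V(L(H))$ be the set of vertices of $L(H)$ corresponding to the edges in $Y$. I would verify the identity $L(H)-V_Y=L(H-Y)$: both graphs have vertex set $E(H)\setminus Y$, and for two edges of $H$ neither lying in $Y$, being adjacent in $H$ is the same as being adjacent in $H-Y$. Thus deleting $k\le s$ vertices from $L(H)$ is \emph{exactly} the operation of deleting the corresponding $k$ edges from $H$ and then passing to the line graph.

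Next I would assume $H$ is $(0,s)$-supereulerian and show $L(H)$ is $s$-Hamiltonian. Given any $Y$ with $|Y|\le s$, taking $X=\emptyset$ in the definition of $(0,s)$-supereulerian produces a spanning closed trail $T$ of $H-Y$. Since $V(T)=V(H-Y)$, the graph $(H-Y)-V(T)$ is edgeless, so $T$ is a dominating closed trail of $H-Y$. Provided $H-Y$ has at least three edges, Theorem~\ref{hanathm} then gives that $L(H-Y)$ is Hamiltonian, and by the identity above $L(H)-V_Y$ is Hamiltonian for every $Y$ with $|Y|\le s$; that is, $L(H)$ is $s$-Hamiltonian. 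Finally I would apply this with $H=L^{i}(G)$ for $i=i_{0,s}(G)$, the least index at which $L^{i}(G)$ is $(0,s)$-supereulerian; this yields that $L^{i+1}(G)=L(L^{i}(G))$ is $s$-Hamiltonian, whence $h_s(G)\le i+1=i_{0,s}(G)+1$. Specializing to $s=0$, with $h_0(G)=h(G)$ and $i_{0,0}(G)=s(G)$, recovers $h(G)\le s(G)+1$.

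The step requiring the most care is not the main logic, which is short, but the degeneracy bookkeeping around the identity $L(H)-V_Y=L(H-Y)$ and the hypothesis of Theorem~\ref{hanathm}: for the relevant graphs $H=L^{i}(G)$ with $G\in\G$, one should confirm that every $H-Y$ has at least three edges so the Harary and Nash-Williams criterion applies, and that enough vertices survive for the defining constraint $s\le |V|-3$ of the $s$-Hamiltonian property to remain meaningful, so that no small-order case undermines the argument.
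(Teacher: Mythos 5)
Your proof is correct and takes essentially the same route as the paper, which derives the observation in a single sentence from Theorem~\ref{hanathm}: the line graph of every $(0,s)$-supereulerian graph is $s$-Hamiltonian, which is exactly the correspondence $L(H)-V_Y=L(H-Y)$ plus the Harary--Nash-Williams criterion that you spell out, then applied to $H=L^{i_{0,s}(G)}(G)$. Your explicit bookkeeping (the vertex-deletion/edge-deletion identity and the at-least-three-edges hypothesis) is just a more careful write-up of the argument the paper leaves implicit.
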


To present the main results, an additional notation would be needed.
Since $G\in\G$, it is observed that (for example, Theorem 18 of \cite{CLXY09})
there exists an integer $i>0$ such that $\delta(L^i(G))\geq 3$.
Define
\begin{equation} \label{d3}
\widetilde d(G)=\min\{i:\delta(L^i(G))\geq 3\}.
\end{equation}
Our main results can now be stated as follows.
\begin{theorem}\label{mainthm}
Let $G\in \G$ be a simple graph with $\delta=\delta(G)$ and $\widetilde d=\widetilde d(G)$. Then, given two non-negative integers $s$ and $t$,
\begin{equation} \label{ist}
i_{s,t}(G)\le \left\{
\begin{aligned}
& \ell(G), &&\mbox{ if }\delta\le 2 \mbox{ and }s=t=0;\\
& \widetilde d+1+\lceil \lg (s+t+1)\rceil, &&\mbox{ if }\delta\le 2  \mbox{ and }s+t\ge 1;\\
&1+\left\lceil\lg\frac{s+t+1}{\delta-2}\right\rceil, && \mbox{ if } 3\le\delta\le s+t+2;\\
& 1, &&{\rm otherwise}.
\end{aligned}
\right.
\end{equation}
\end{theorem}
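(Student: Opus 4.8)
The plan is to decouple the argument into two independent ingredients: a \emph{degree-growth} estimate that controls how fast the minimum degree increases under the line-graph operator, and a \emph{threshold lemma} asserting that once the minimum degree of a graph $H$ is large enough relative to $s+t$, its line graph $L(H)$ is already $(s,t)$-supereulerian. For the first ingredient I would record the elementary fact that for an edge $e=uv$ of $H$ the corresponding vertex of $L(H)$ has degree $d_H(u)+d_H(v)-2$, so that $\delta(L(H))\ge 2\delta(H)-2$, equivalently $\delta(L(H))-2\ge 2(\delta(H)-2)$. Iterating this, $\delta(L^i(G))-2\ge 2^i(\delta(G)-2)$ whenever $\delta(G)\ge 3$, since then $\delta$ never drops back to $2$. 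This is precisely the doubling behaviour that manufactures the base-$2$ logarithms appearing in \eqref{ist}.

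The core of the proof is the threshold lemma: if $\delta(H)\ge s+t+3$, then $L(H)$ is $(s,t)$-supereulerian. The strategy is to exploit the rich triangular structure of line graphs. Since the edges of $H$ incident with a common vertex $v$ form a clique in $L(H)$ of size $d_H(v)\ge\delta(H)$, each edge of $L(H)$ lies in at least $\delta(H)-2$ triangles; that is, $L(H)$ is $(\delta(H)-2)$-triangular. Given disjoint $X,Y\subseteq E(L(H))$ with $|X|\le s$ and $|Y|\le t$, I would delete $Y$ and then force the edges of $X$ into a spanning closed trail by a Catlin-style reduction: the hypothesis $\delta(H)-2\ge s+t+1$ guarantees that each forced edge of $X$ and each deleted edge of $Y$ can be charged to a private triangle, so that after contracting a suitable collapsible configuration the reduced graph is $K_1$, whence $L(H)-Y$ has a spanning closed trail through $X$. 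The delicate point, and what I expect to be the main obstacle, is executing the deletions in $Y$ and the forced inclusions in $X$ \emph{simultaneously} while preserving both edge-connectivity and collapsibility; the $k$-triangular abundance with $k\ge s+t+1$ is exactly the bookkeeping margin needed to keep the reduction non-trivial.

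With these two ingredients the four cases of \eqref{ist} follow by counting iterations. In the range $3\le\delta\le s+t+2$, set $j=\lceil\lg\frac{s+t+1}{\delta-2}\rceil$; the degree-growth estimate gives $\delta(L^{j}(G))-2\ge 2^{j}(\delta-2)\ge s+t+1$, so $\delta(L^{j}(G))\ge s+t+3$, and the threshold lemma makes $L^{j+1}(G)$ be $(s,t)$-supereulerian, i.e. $i_{s,t}(G)\le 1+j$. The ``otherwise'' case is exactly the instance $\delta\ge s+t+3$, where the threshold lemma applies to $G$ itself and yields $i_{s,t}(G)\le 1$. When $\delta\le 2$ and $s+t\ge 1$, I would first spend $\widetilde d$ iterations to reach $\delta(L^{\widetilde d}(G))\ge 3$ by \eqref{d3}, and then apply the previous count with base minimum degree $3$ (so $\delta-2=1$), giving $\widetilde d+1+\lceil\lg(s+t+1)\rceil$ iterations in total. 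Finally, when $\delta\le 2$ and $s=t=0$ the quantity $i_{0,0}(G)=s(G)$ is bounded by $\ell(G)$, which is exactly Theorem \ref{knownthm}(i).

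Two checks round out the argument. First, the ceilings are what keep the boundaries consistent: at $\delta=s+t+2$ the case-$3$ bound evaluates to $2$, matching one application of degree growth followed by the threshold, and at $\delta=s+t+3$ both the case-$3$ formula and the ``otherwise'' bound return $1$. Second, one must confirm that the hypotheses $G\in\G$ and $s\le|V(G)|-3$ propagate to every iterated line graph so that the intermediate statements are well posed. Beyond this routine bookkeeping, the genuine difficulty is concentrated entirely in the threshold lemma; once the simultaneous edge-forcing and edge-deletion reduction is established for $(\delta(H)-2)$-triangular line graphs, the rest is the logarithmic accounting sketched above.
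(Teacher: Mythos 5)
Your overall architecture is the same as the paper's: a degree-doubling estimate (your iterated $\delta(L^i(G))-2\ge 2^i(\delta-2)$ is Theorem \ref{lowerbound}, which the paper converts into the triangularity statement of Lemma \ref{delta-2}), a threshold statement turning triangular abundance into the $(s,t)$-supereulerian property, and the logarithmic case accounting, which you carry out correctly (including the $\delta\le 2$ cases via $\widetilde d$ and via Theorem \ref{knownthm}(i)). However, there is a genuine gap, and you name it yourself: your ``threshold lemma'' is exactly the paper's Lemma \ref{st} (every $(s+t+1)$-triangular graph is $(s,t)$-supereulerian), and you do not prove it. The sketch ``charge each forced edge of $X$ and each deleted edge of $Y$ to a private triangle, so that after contracting a suitable collapsible configuration the reduced graph is $K_1$'' never specifies the device that forces a closed trail of $L(H)-Y$ to \emph{traverse} the edges of $X$; collapsibility of a subgraph by itself only yields a spanning closed trail, not one containing prescribed edges. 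Since the entire theorem rests on this lemma, the proposal as written is incomplete at its load-bearing step.

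The two missing ingredients are elementary but specific. First, the ``private triangle'' bookkeeping is made rigorous by Lemma \ref{kl}: in a simple graph the $k$ triangles through a fixed edge $e$ pairwise intersect only in $e$, so deleting the at most $s+t$ edges of $X\cup Y$ leaves $H=G-(X\cup Y)\in\T_1$, and $H$ is then collapsible by Theorem \ref{collapsible}(i) (connectivity of $H$ also needs a word: the same edge-disjoint triangles give $(s+t+2)$-edge-connectivity, so removing $s+t$ edges cannot disconnect). Second, the forcing of $X$ is achieved not by charging but by \emph{elementary subdivision}: form $G(X)$ by subdividing each $x_i\in X$ with a new vertex $v_{x_i}$; then $(G(X)-Y)/H$ has every edge in a digon (each degree-$2$ vertex $v_{x_i}$ has both neighbors in the contracted $H$), hence is collapsible by Theorem \ref{collapsible}(i), hence $G(X)-Y$ is collapsible by Theorem \ref{collapsible}(ii) and so supereulerian; any spanning eulerian subgraph must use both edges at each degree-$2$ vertex $v_{x_i}$, which translates back to a spanning closed trail of $G-Y$ containing $X$. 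With this lemma supplied, your counting reproduces the paper's route (which packages it through the $k$-triangular index bound of Theorem \ref{kindex}(ii) together with Theorem \ref{old1}(ii) for the $s=t=0$, $\delta\le 2$ case), and nothing else in your proposal is problematic.
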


Using Observation \ref{obs1}, Theorem \ref{mainthm} implies Corollary \ref{maincor} below.

\begin{corollary}\label{maincor}
Let $G\in \G$ be a simple graph with $\delta=\delta(G)$ and $\widetilde d=\widetilde d(G)$. Then, given a non-negative integer $s\le |V(G)|-3$,
\begin{equation}
h_s(G)\le \left\{
\begin{aligned}
& \ell(G)+1, &&\mbox{ if }\delta\le 2 \mbox{ and }s=0;\\
& \widetilde d+2+\lceil \lg (s+1)\rceil, &&\mbox{ if }\delta\le 2  \mbox{ and }s\ge 1;\\
& 2+\left\lceil\lg\frac{s+1}{\delta-2}\right\rceil, && \mbox{ if } 3\le\delta\le s+2;\\
& 2, &&{\rm otherwise}.
\end{aligned}
\right.
\end{equation}
\end{corollary}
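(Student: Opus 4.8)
The plan is to derive the corollary directly from Theorem~\ref{mainthm} together with Observation~\ref{obs1}, so that no new combinatorial argument is needed beyond a careful specialization of parameters. Observation~\ref{obs1} supplies the single inequality $h_s(G)\le i_{0,s}(G)+1$, which converts any upper bound on the $(0,s)$-supereulerian index into an upper bound on the $s$-Hamiltonian index at the cost of one extra iteration of the line-graph operator. Thus the entire task reduces to specializing the bound \eqref{ist} of Theorem~\ref{mainthm} to the case in which the first supereulerian parameter is $0$ and the second equals the corollary's $s$.

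Concretely, I would substitute $s\mapsto 0$ and $t\mapsto s$ in \eqref{ist}, so that every occurrence of the sum $s+t$ becomes $s$, and then check that the four defining conditions transform correctly: ``$\delta\le 2$ and $s=t=0$'' becomes ``$\delta\le 2$ and $s=0$''; ``$\delta\le 2$ and $s+t\ge 1$'' becomes ``$\delta\le 2$ and $s\ge 1$''; ``$3\le\delta\le s+t+2$'' becomes ``$3\le\delta\le s+2$''; and ``otherwise'' is unchanged. This yields
\[
i_{0,s}(G)\le
\begin{cases}
\ell(G), & \delta\le 2,\ s=0;\\
\widetilde d+1+\lceil\lg(s+1)\rceil, & \delta\le 2,\ s\ge 1;\\
1+\left\lceil\lg\frac{s+1}{\delta-2}\right\rceil, & 3\le\delta\le s+2;\\
1, & \text{otherwise}.
\end{cases}
\]
Adding $1$ to each branch, as licensed by Observation~\ref{obs1}, reproduces exactly the four bounds claimed in the corollary.

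Since every step is a substitution, I do not expect a genuine obstacle; the points that merit care are purely bookkeeping. First, I would confirm that the case boundaries match after the substitution, in particular that the degenerate branch $s=t=0$ of the theorem collapses onto the $s=0$ branch of the corollary and there recovers the known estimate $h(G)\le s(G)+1\le\ell(G)+1$ of Theorem~\ref{knownthm}(ii). Second, I would make explicit that Observation~\ref{obs1} is precisely the bridge required here: if $L^{i}(G)$ is $(0,s)$-supereulerian, then after deleting any $Y\subseteq E(L^{i}(G))$ with $|Y|\le s$ the graph $L^{i}(G)-Y$ has a spanning, hence dominating, closed trail, so Theorem~\ref{hanathm} forces $L^{i+1}(G)-Y$ to be Hamiltonian, which is exactly $s$-Hamiltonicity of $L^{i+1}(G)$. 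Finally, I would verify the trivial domain requirement $s\le|V(L^{i}(G))|-3$ for the relevant iterate, which follows from the hypothesis $s\le|V(G)|-3$ and the growth of $|V(L^{i}(G))|$ for $G\in\G$.
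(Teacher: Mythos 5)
Your proposal is correct and matches the paper's own derivation exactly: the paper obtains Corollary~\ref{maincor} by specializing Theorem~\ref{mainthm} with $(s,t)=(0,s)$ and adding $1$ via Observation~\ref{obs1}, which is precisely your argument (your extra remarks unpacking Observation~\ref{obs1} through Theorem~\ref{hanathm} and checking the case boundaries are just careful bookkeeping the paper leaves implicit).
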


Given a simple graph $G\in \G$ with $\ell=\ell(G)$ and $\widetilde d=\widetilde d(G)$.
By the formula to compute $\widetilde d$ to be presented in Section \ref{sect_d}, we have 
$\widetilde d\le\ell+2$.
When $s\ge 6$, as $\lceil\lg(s+1)\rceil+2\le s-1$, we have
$\widetilde d+2+\lceil\lg(s+1)\rceil\le\ell+1+s$. Moreover, since $\lceil\lg(s+1)\rceil=o(s)$ as $s\rightarrow \infty$, it follows that $\widetilde d+2+\lceil\lg(s+1)\rceil=o(\ell+s+1)$ as $s\rightarrow \infty$.
Similarly, when $s\ge 1$ and $n\ge 1$, we have $\left\lceil\lg\frac{s+1}{n}\right\rceil\le s$ and $\left\lceil\lg\frac{s+1}{n}\right\rceil=o(s)$ as $s\rightarrow \infty$.
It means that $2+\left\lceil\lg\frac{s+1}{n}\right\rceil\le s+2$ and $2+\left\lceil\lg\frac{s+1}{n}\right\rceil=o(s+2)$ as $s\rightarrow \infty$.
Hence, when $s \ge 6$,
the upper bounds above sharpen the result of Theorem \ref{knownthm}(iii).

In the next section, we present preliminaries and tools that will be used in our discussions.
In Section 3, we shall show some important lemmas, including a corrected formula to compute $\widetilde d(G)$, which are very helpful to prove the main result, Theorem \ref{mainthm}, in Section 4.

\section{Preliminaries}

For a vertex $v\in V(G)$, we denote $N_G(v)$ to be the set of all neighbors of vertex $v$ in a graph $G$, and denote $E_G(v)$ to be the set of all edges incident with $v$ in $G$.
The degree of vertex $v$ is denoted by $d_G(v)=|E_G(v)|$.
For integer $i\ge 0$, let $D_i(G)$ be the set of all vertices of degree $i$ in $G$, and let $O(G)$  be the set of all odd degree vertices in $G$.
For an edge $e\in E(G)$, let $E_G(e)$ be the set of all edges adjacent with $e$ in $G$ and $d_G(e)=|E_G(e)|$.
For notational convenience, if $v\in V(G)$ and $e\in E(G)$, we write $d(v)$ and $d(e)$ for $d_G(v)$ and $d_G(e)=d_{L(G)}(e)$, respectively, when $G$ or $L(G)$ is understood from context.

\subsection{Iterated Line Graphs}

For a subset $X\subseteq E(G)$, let $L^0(X)=X$ and $L^i(X)=L^i(G)[L^{i-1}(X)]$ for each integer $i\ge 1$.
Moreover, for a subset $Y\subseteq E(L^i(G))$, by the definition of the iterated line graphs, there exists a unique $Z\subseteq E(L^{i-j}(G))$ for each $j\in[0,i]$ such that $L^{j}(Z)=Y$, denoted $Z=L^{-j}(Y)$.
Thus, for two integers $i,j$ and an edge subset $X\subseteq E(G)$, $L^{i}(L^j(X))=L^{i+j}(X)$.

\begin{lemma}\label{(s,t)path}
Given an integer $i\geq 0$ and a graph $G$.
If $P$ is a divalent $(s,t)$-path in $L^i(G)$ of length $r$ that is not in a $K_3$, then for each $j\in[0,i]$,
$L^{-j}(P)$ is a divalent $(s,t)$-path in $L^{i-j}(G)$ of length $r+j$.
\end{lemma}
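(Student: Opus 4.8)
The plan is to reduce to the single-step case $j=1$ and iterate, using the identity $L^{i}(L^{j}(X))=L^{i+j}(X)$ recorded just above. Writing $H=L^{i-1}(G)$ so that $L^{i}(G)=L(H)$, it suffices to prove the following one-step statement and apply it $j$ times, with the base graph running through $L^{i-1}(G),L^{i-2}(G),\dots,L^{i-j}(G)$: if $Q=u_0u_1\cdots u_q$ is a divalent $(s,t)$-path of length $q\ge 1$ in $L(H)$ that is not in a $K_3$, then $L^{-1}(Q)$ is a divalent $(s,t)$-path of length $q+1$ in $H$ that is \emph{again} not in a $K_3$. I deliberately carry ``not in a $K_3$'' into the conclusion, since that is exactly the hypothesis needed to reapply the statement at the next level; the case $j=0$ is the trivial base $L^{0}(Q)=Q$ of length $r$.

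For the one-step statement, recall that $V(L(H))=E(H)$, so each vertex $u_k$ of $Q$ is an edge $f_k$ of $H$, and by the definition of $L^{-1}$ the preimage $L^{-1}(Q)$ is the subgraph of $H$ whose edge set is $\{f_0,\dots,f_q\}$, with consecutive $f_k,f_{k+1}$ sharing a vertex of $H$ because $u_ku_{k+1}\in E(L(H))$. Throughout I use that every $L^{k}(G)$ is simple, so that $d_{L(H)}(xy)=d_H(x)+d_H(y)-2$ for each edge $xy$ of $H$. The goal is to show $\{f_0,\dots,f_q\}$ forms a path of length $q+1$ all of whose $q$ internal vertices have degree $2$ in $H$ and whose two ends have degrees $s$ and $t$.

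The crux is a local degree analysis at the internal vertices, and this is where the ``not in a $K_3$'' hypothesis does all the work; I expect it to be the only genuinely delicate point. For internal $u_k$ (that is, $1\le k\le q-1$) we have $d_{L(H)}(f_k)=2$, so $f_k=x_ky_k$ with $d_H(x_k)+d_H(y_k)=4$, whence $\{d_H(x_k),d_H(y_k)\}$ is either $\{2,2\}$ or $\{1,3\}$. I claim $\{1,3\}$ is impossible: if $d_H(x_k)=1$ then $x_k$ is a leaf, so the only two edges adjacent to $f_k$ in $H$, namely its path-neighbours $f_{k-1}$ and $f_{k+1}$, must both meet $f_k$ at $y_k$; hence $f_{k-1}$ and $f_{k+1}$ share $y_k$, giving $u_{k-1}u_{k+1}\in E(L(H))$ and a triangle $u_{k-1}u_ku_{k+1}$. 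Since interior vertices of $Q$ have degree $2$, neither $u_{k-1}$ nor $u_{k+1}$ can itself be interior (otherwise its two neighbours would be forced to coincide with an earlier vertex), so $k-1=0$ and $k+1=q$, forcing $q=2$ and $Q=u_0u_1u_2$ to be precisely that triangle, contradicting the hypothesis. Therefore every internal $f_k$ has $d_H(x_k)=d_H(y_k)=2$.

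With this dichotomy resolved, the assembly is routine. For $q\ge 2$, each vertex shared by two consecutive edges is an endpoint of some internal edge, hence has degree $2$ in $H$; this also forbids chords, so $\{f_0,\dots,f_q\}$ is a genuine path $w_0,v_0,\dots,v_{q-1},w_1$ of length $q+1$ with all interior vertices $v_0,\dots,v_{q-1}$ of degree $2$, and applying the degree formula to the end edges $f_0=w_0v_0$ and $f_q=v_{q-1}w_1$ with $d_H(v_0)=d_H(v_{q-1})=2$ yields $d_H(w_0)=s$ and $d_H(w_1)=t$. For $q=1$ there are no interior vertices of $Q$ to exploit, so I use the hypothesis directly: if the unique shared vertex $v$ had degree $\ge 3$, or if the two free ends $w_0,w_1$ were adjacent in $H$, then $L(H)$ would contain a triangle through $u_0u_1$, again putting $Q$ in a $K_3$; ruling both out shows $L^{-1}(Q)$ is the divalent length-$2$ path $w_0vw_1$ with the correct end degrees. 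Finally the output is not in a $K_3$: its length is $q+1\ge 2$, automatically exceeding $2$ when $q\ge 2$, while the case $q=1$ is exactly what the previous sentence excluded. This proves the one-step statement, and iterating it $j$ times gives the lemma.
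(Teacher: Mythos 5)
Your overall strategy is essentially the paper's own: the paper also peels off one line-graph level at a time (phrased there as a minimal-counterexample induction on $j$ rather than forward iteration), carries the ``not in a $K_3$'' condition along the levels (arguing that $L(K_3)=K_3$ pushes a triangle forward from $Q$ to $P$, where you instead argue directly that the pulled-back path is too long to lie in a $K_3$, or for $q=1$ has the needed non-adjacencies), and recovers the end degrees from the same computation $d(x)=d(u)-2+2$. In fact your write-up supplies more detail than the paper at the crucial point, namely why internal degree-$2$ vertices of $Q$ pull back to internal degree-$2$ vertices in $H$: your exclusion of the $\{1,3\}$ degree split via the forced triangle $u_{k-1}u_ku_{k+1}$ is exactly the content the paper compresses into ``as $Q$ is not in a $K_3$ and the definition of divalent paths''.

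There is, however, one step you assert without proof, and it is false as literally stated: ``this also forbids chords''. Your degree analysis rules out a chord $u_ju_k$ only when at least one of $u_j,u_k$ is \emph{internal} to $Q$; it says nothing about a chord $u_0u_q$ joining the two ends. For $q\ge 3$, neither divalency nor the no-$K_3$ hypothesis excludes $f_0$ and $f_q$ sharing their free endpoints: let $H$ be a cycle $wv_0v_1\cdots v_{q-1}w$ with one pendant edge attached at $w$, and let $Q=f_0f_1\cdots f_q$ be the path in $L(H)$ on the cycle's edges; then $Q$ is a divalent $(3,3)$-path of length $q\ge 3$, not in any $K_3$ (it has more than three vertices), yet $\{f_0,\dots,f_q\}$ closes up into a cycle of $H$, i.e.\ $w_0=w_1$, so your ``genuine path'' conclusion fails. (Your $q=1$ and $q=2$ cases do exclude this, because there the end-end adjacency creates a triangle through $Q$.) The repair is available from the very definition you quote at the start: $L^{-1}(Q)$ is the unique $Z$ with $L(Z)=L(H)[Z]=Q$, an \emph{induced} subgraph, so the existence of the preimage presupposed in the lemma forces $Q$ to be an induced path of $L(H)$, and induced-ness kills the chord $u_0u_q$ outright (if $w_0=w_1$ then $u_0u_q\in E(L(H))$); in the example above $L^{-1}(Q)$ simply does not exist and the statement is vacuous. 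To be fair, the paper's own proof is silent on exactly this point (it just writes the preimage as an $(x,y)$-path with $\{x,y\}\subset V(J)$), so with that one-sentence patch your argument is complete and matches the paper's. A smaller caveat of the same kind: your appeal to ``every $L^k(G)$ is simple'' fails at the final pullback into $L^0(G)=G$, since the paper permits parallel edges in $G$; the paper's proof shares this gloss, and your triangle argument is in any case robust enough to absorb it.
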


\begin{proof}
Assume that $j_0$ is the smallest number such that $L^{-j_0}(P)$ is not a divalent $(s,t)$-path of length $r+j_0$ where $0< j_0\le i$.
Let $Q=L^{-j_0+1}(P)$. Thus, $Q$ is a divalent $(s,t)$-path in $L^{i-(j_0-1)}(G)$ of length $r+j_0-1$.
First, we claim that $Q$ is not in a $K_3$. If $Q$ is in a $K_3$, then
$P=L^{j_0-1}(Q)$ is in a $K_3$ since the line graph of a $K_3$ is still a $K_3$,
which contradicts the assumption of $P$ being not in a $K_3$.

Now, set $J=L^{i-j_0}(G)$, and then $L(J)=L^{i-(j_0-1)}(G)$.
Let $Q$ be a $(u,v)$-path of $L(J)$, where $u\in D_s(L(J))$ and $v\in D_t(L(J))$.
As $Q$ is not in a $K_3$ and the definition of divalent paths,
$L^{-j_0}(P)=L^{-1}(Q)$ is a divalent $(u,v)$-path in $J$, where $\{u,v\}\subset E(J)$.
Let $L^{-j_0}(P)$ be a $(x,y)$-path where $\{x,y\}\subset V(J)$.
Since $d(x)=d(u)-2+2=s$ and $d(y)=d(v)-2+2=t$,
$L^{-j_0}(P)$ is a divalent $(s,t)$-path of length $r+j_0$, which contradicts our choice of $j_0$.
\end{proof}

\subsection{Collapsible Graphs}

In \cite{Catl88}, Catlin defined collapsible graphs as a useful tool to study supereulerian graphs.
A graph $G$ is \emph{collapsible} if for every subset $R\subseteq V(G)$ with $|R| \equiv 0$ (mod 2),
$G$ has a subgraph $\Gamma_R$ such that $O(\Gamma_R)=R$ and $G-E(\Gamma_R)$ is connected.
By definition, all complete graphs $K_n$ except $K_2$ are collapsible.
As shown in Proposition 1 of \cite{LaSY13},
a graph $G$ is collapsible if and only if for every subset $R\subseteq V(G)$ with $|R|\equiv 0$ (mod 2),
$G$ has a spanning connected subgraph $L_R$ with $O(L_R)=R$. As
$L_{\emptyset}$ is a spanning eulerian subgraph, every collapsible graph is supereulerian.
Collapsible graphs have been considered to be very useful to study eulerian subgraphs via the graph contraction.
For an edge subset $X\subseteq E(G)$, the \emph{contraction} $G/X$ is obtained from $G$
by identifying the two ends of each edge in $X$ and deleting the resulting loops.
If $H$ is a subgraph of $G$, then we write $G/H$ for $G/E(H)$.
The following theorem summarizes some useful properties of collapsible graphs for our proofs.

\begin{theorem}\label{collapsible} Let $G$ be a graph and $H$ be a subgraph of $G$.
\\
(i) {\rm (Catlin, Theorem 3 of \cite{CIJS90})}
If each edge of a connected graph $G$ is in a cycle of length $2$ or $3$,
then $G$ is collapsible.
\\
(ii) {\rm (Catlin, Corollary of Theorem 3 of \cite{Catl88})} If $H$ is collapsible,
then $G$ is collapsible if and only if $G/H$ is collapsible.
\end{theorem}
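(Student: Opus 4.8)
Both parts are classical results of Catlin, and I would reconstruct them from the spanning–connected–subgraph characterization quoted just above the statement (Proposition~1 of \cite{LaSY13}): $G$ is collapsible if and only if for every even $R\subseteq V(G)$ there is a spanning connected subgraph $L_R$ of $G$ with $O(L_R)=R$. The plan is to establish the contraction principle (ii) first, since it is the engine, and then to derive (i) from it by induction on $|E(G)|$.

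For (ii) I would transport spanning connected subgraphs across the contraction map in both directions. Write $v_H$ for the vertex of $G/H$ obtained from $H$, so that $V(G/H)=(V(G)\setminus V(H))\cup\{v_H\}$ and every edge of $G/H$ is an edge of $G$ with at most one end in $V(H)$. For the forward implication (assume $G$ collapsible), given an even $\bar R\subseteq V(G/H)$ I would lift it to an even $R\subseteq V(G)$ agreeing with $\bar R$ off $H$ and containing exactly one vertex of $H$ precisely when $v_H\in\bar R$; then a spanning connected $L_R$ with $O(L_R)=R$ projects to $\bar L:=L_R/H$, which is spanning and connected in $G/H$. The only delicate point is the parity at $v_H$: since $\sum_{u\in V(H)}d_{L_R}(u)\equiv|R\cap V(H)|\pmod 2$ counts each $L_R$-edge inside $H$ twice, $d_{\bar L}(v_H)$ has the same parity as $|R\cap V(H)|$, which is odd exactly when $v_H\in\bar R$; off $H$ the degrees are unchanged, so $O(\bar L)=\bar R$.

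For the harder backward implication (assume both $G/H$ and $H$ collapsible), given an even $R\subseteq V(G)$ I would split $R=R_1\cup R_2$ with $R_1=R\cap V(H)$. Taking $\bar R\subseteq V(G/H)$ to equal $R_2$, and adjoining $v_H$ precisely when $|R_1|$ is odd, keeps $\bar R$ even; collapsibility of $G/H$ then yields a spanning connected $\bar L$ with $O(\bar L)=\bar R$, whose lift $L'$ to $G$ spans $V(G)\setminus V(H)$, has no edge inside $H$, and meets $H$ in the set $S$ of $H$-vertices of odd $L'$-degree, where $|S|\equiv d_{\bar L}(v_H)\equiv|R_1|\pmod 2$. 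The key is that $|S\triangle R_1|$ is therefore even, so collapsibility of $H$ supplies a spanning connected subgraph $L_H$ of $H$ with $O(L_H)=S\triangle R_1$; superimposing, $L'\cup L_H$ is spanning and connected in $G$ with odd-degree set $\bigl(S\triangle(S\triangle R_1)\bigr)\cup R_2=R_1\cup R_2=R$, proving $G$ collapsible. I expect this parity-and-connectivity bookkeeping inside $H$ to be the main obstacle, since the repair subgraph $L_H$ must simultaneously fix all parities to $R_1$ and reconnect $H$ to the part of $G$ already threaded by $L'$.

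Finally, for (i) I would induct on $|E(G)|$, the base case $G=K_1$ being immediate. In the inductive step I would pick any edge and a cycle $C$ of length $2$ or $3$ through it; the $2$-cycle is checked directly to be collapsible and $K_3$ is collapsible by the fact (recorded just after the definition) that every $K_n$ with $n\neq 2$ is collapsible, so by (ii) it suffices to prove $G/C$ collapsible. Now $G/C$ is connected with strictly fewer edges, and the crucial observation is that the hypothesis is inherited: contraction never lengthens the image of a cycle, so for each surviving edge the image of its short cycle is a closed walk of length at most $3$ through that edge, from which a $2$- or $3$-cycle is extracted. The induction hypothesis then gives $G/C$ collapsible and (ii) lifts this back to $G$. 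The one point to verify with care is precisely this inheritance, in particular the degenerate case where the short cycle through the chosen edge shares an edge with $C$ and its image collapses to a $2$-cycle.
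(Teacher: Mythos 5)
The paper offers no proof of Theorem~\ref{collapsible}: both parts are quoted from Catlin (\cite{Catl88} and \cite{CIJS90}) as known results, so there is no in-paper argument to compare against; what can be assessed is whether your blind reconstruction is sound, and it is. Your proof of (ii) via the spanning-connected-subgraph characterization (Proposition~1 of \cite{LaSY13}) is correct in both directions: the parity bookkeeping is right, since for the forward direction $\sum_{u\in V(H)}d_{L_R}(u)$ counts edges of $L_R$ inside $V(H)$ twice, giving $d_{\bar L}(v_H)\equiv |R\cap V(H)|\pmod 2$, and for the backward direction $|S|\equiv d_{\bar L}(v_H)\equiv |R_1|\pmod 2$ makes $S\triangle R_1$ even so that $L_H$ exists. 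The one point you flag as the ``main obstacle,'' connectivity of $L'\cup L_H$, does go through, but deserves the explicit half-line you omit: every component of $L'$ meets $V(H)$, because a $\bar L$-path from any vertex to $v_H$ lifts to an $L'$-path that stays inside the component until its first edge entering $V(H)$, and then $L_H$, being connected and spanning $H$, glues all components together. Your derivation of (i) from (ii) by induction on $|E(G)|$ is also correct, and the degenerate case you single out is exactly the right one to check: if the short cycle $C'$ through a surviving edge is a triangle with exactly two vertices on $C$, its edge joining those two vertices becomes a loop and is deleted, leaving a digon through the edge in question; if all three vertices of $C'$ lie on $V(C)$, the edge itself would have become a loop and would not survive into $G/C$, so that case never arises, and a surviving $2$-cycle contracts to a $2$-cycle. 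This is, in substance, Catlin's reduction method (prove the contraction principle, then repeatedly contract $2$- and $3$-cycles), so your reconstruction is faithful to the original proofs rather than a genuinely new route; its value here is that it makes explicit the parity and reconnection arguments the paper takes on faith from the literature.
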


\section{The $k$-Triangular Index}

A cycle of length 3 is often called a triangle.
Following \cite{BrVe87}, for an integer $k> 0$, a graph $G$ is \emph{$k$-triangular}
if every edge lies in at least $k$ distinct triangles in $G$; a graph $G$ is \emph{triangular}
if $G$ is $1$-triangular.
Let $\T_k$ denote the family of all $k$-triangular graphs.
Thus, $\delta(G)\ge k+1$ if $G\in\T_k$.

Triangular graphs are often considered as models for some kinds of cellular networks (\cite{Have01})
and for certain social networks (\cite{LeRU14}), as well as mechanisms to study network stabilities and
to classify spam websites (\cite{BBCA08}).
In addition to its applications in the hamiltonicity of line graphs (\cite{BrVe87}),
triangular graphs are also related to design theory.
In 1984, Moon in \cite{Mo84} introduced the Johnson graphs $J(n,s)$, named after Selmer M. Johnson
for the closely related Johnson scheme.
The vertex set of  $J(n,s)$ is all $s$-element subsets of an $n$-element set,
where two vertices are adjacent
whenever the intersection of the corresponding two subsets contains exactly $s-1$ elements.
For example, $J(n,1)$ is isomorphic to $K_n$.
By definitions, for any integers $n\ge 3$ and $s$ with  $n > s$,
$J(n,s)$ is $(n-2)$-triangular. Therefore, it is of interests to investigate
$k$-triangular graphs for a generic value of $k$.

For an integer $k>0$, define $t_k(G)$ to be the \emph{$k$-triangular index} of $G \in \G$, that is,
the smallest integer $m$ such that $L^m(G)\in\T_k$.
The triangular index $t_1(G)$ is first investigated by Zhang et al.

\begin{theorem} \label{old1}
Let $G\in\G$ be a simple graph. Each of the following holds.
\\
(i) {\rm (Zhang et al., Proposition 2.3 (i) of \cite{ZSCXZ12})} Being triangular is line graph stable.
\\
(ii) {\rm (Zhang et al., Lemma 3.2 (iii) of \cite{ZELS08})} $t_1(G) \le \ell(G)$.
\end{theorem}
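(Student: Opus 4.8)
The plan is to establish the two parts separately: part (i) by a direct argument that locates a triangle through any prescribed edge of the line graph, and part (ii) by combining a characterization of those edges of a line graph that fail to lie in a triangle with an inductive control of the parameter $\ell$ under the line graph operation. For part (i), suppose $G$ is triangular and let $\varepsilon$ be an arbitrary edge of $L(G)$, joining two edges $e_1=va$ and $e_2=vb$ of $G$ that share a common vertex $v$. I would split into cases on $d_G(v)$. If $d_G(v)\ge 3$, then any third edge $e_3$ incident with $v$ is adjacent in $G$ to both $e_1$ and $e_2$, so $\{e_1,e_2,e_3\}$ induces a triangle of $L(G)$ through $\varepsilon$. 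If $d_G(v)=2$, then $e_1,e_2$ are the only edges at $v$; since $G$ is triangular, $e_1$ lies in a triangle, which (as $v$ has degree $2$) must be $vab$, forcing $ab\in E(G)$. Then $e_3=ab$ is adjacent to both $e_1$ and $e_2$, and again $\{e_1,e_2,e_3\}$ is a triangle of $L(G)$ through $\varepsilon$. Hence every edge of $L(G)$ lies in a triangle, proving (i).

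For part (ii), the first key step is a characterization refining the case analysis above: an edge $\varepsilon=\{e_1,e_2\}$ of $L(H)$, with $e_1,e_2$ meeting at $v$, fails to lie in a triangle precisely when $d_H(v)=2$ and the two neighbours $a,b$ of $v$ satisfy $ab\notin E(H)$, that is, precisely when $e_1,e_2$ form a proper divalent path $a\,v\,b$ of length $2$. Consequently $L(H)$ is triangular if and only if $H$ has no proper divalent path of length $2$. I would then note that $H$ has a proper divalent path of length $2$ if and only if $\ell(H)\ge 2$: the forward direction is immediate, and for the converse, given a proper divalent path $v_0v_1\cdots v_r$ with $r\ge 2$, the path itself works when $r=2$, while for $r\ge 3$ the subpath $v_1v_2v_3$ is automatically proper because $d(v_1)=2$ with neighbours $v_0,v_2$ forces $v_1v_3\notin E(H)$. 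Thus $L(H)$ is triangular if and only if $\ell(H)\le 1$.

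The second key step is the monotonicity $\ell(L(H))\le \ell(H)-1$ whenever $\ell(H)\ge 2$. To prove it, I would take a proper divalent path $P'$ in $L(H)$ realizing $r:=\ell(L(H))$; if $r\le 1$ the inequality is trivial, so assume $r\ge 2$, in which case $P'$ is not contained in a $K_3$. Applying Lemma \ref{(s,t)path} with $i=j=1$ then lifts $P'$ to a divalent path $L^{-1}(P')$ in $H$ of length $r+1\ge 3$, which is proper since its length exceeds $2$; hence $\ell(H)\ge r+1$, giving the claim. Iterating on the iterates $L^k(G)$ (which remain in $\G$), we get $\ell(L^k(G))\le \ell(G)-k$ so long as the value stays at least $2$, and in particular $\ell(L^{\ell(G)-1}(G))\le 1$. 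By the characterization of the first step applied to $H=L^{\ell(G)-1}(G)$, the graph $L^{\ell(G)}(G)$ is triangular, and therefore $t_1(G)\le \ell(G)$.

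The main obstacle is the careful bookkeeping around the $K_3$ exception built into the definition of a proper divalent path. Both the characterization in the first step and the application of Lemma \ref{(s,t)path} in the second hinge on correctly excluding the degenerate length-$2$-inside-a-triangle configuration, and one must verify that the degree-$2$ constraint on internal vertices forces the lifted and restricted paths to remain proper. Once these properness checks are in place, the characterization follows from the local incidence structure of line graphs and the monotonicity from Lemma \ref{(s,t)path}, so that the bound $t_1(G)\le\ell(G)$ falls out by the induction above.
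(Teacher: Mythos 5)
Your proof is correct, but there is nothing in the paper to compare it against: Theorem \ref{old1} is imported entirely by citation (part (i) from Proposition 2.3(i) of \cite{ZSCXZ12}, part (ii) from Lemma 3.2(iii) of \cite{ZELS08}), so the paper contains no proof of either part. What you have produced is a valid self-contained derivation, and it meshes well with the paper's own machinery. For (i), your argument is the natural extension of the paper's proof of Theorem \ref{kindex}(i), which covers only $k\ge 2$ and exploits $\delta(G)\ge k+1\ge 3$ to make the degree-$2$ case vacuous; your case $d_G(v)=2$, where triangularity of $G$ forces the triangle through $e_1$ to be $vab$ and hence $ab\in E(G)$, supplies exactly the extra step that the $k=1$ statement needs (a $1$-triangular graph only guarantees $\delta\ge 2$). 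For (ii), your route --- characterizing the non-triangular edges of $L(H)$ as those arising from proper divalent paths of length $2$ in $H$, deducing that $L(H)\in\T_1$ if and only if $\ell(H)\le 1$, and then proving the monotonicity $\ell(L(H))\le \ell(H)-1$ by lifting an extremal proper divalent path via Lemma \ref{(s,t)path} with $i=j=1$ --- is a legitimate use of the paper's lemma (which is proved independently of Theorem \ref{old1}, so there is no circularity), and your properness bookkeeping is sound: a length-$2$ proper path is not in a $K_3$ by definition, a proper divalent path of length at least $3$ cannot be in a $K_3$ on vertex-count grounds, and the lifted path of length $r+1\ge 3$ is automatically proper. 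Two small points are worth stating explicitly if you write this up: the characterization step uses simplicity of $H$ (to conclude that $e_3=ab$ is the only possible third edge when $d_H(v)=2$), so you should note that all iterates $L^k(G)$ of a simple graph remain simple; and since any graph with an edge has $\ell\ge 1$, the iteration gives $\ell(L^k(G))\le\max\{1,\ell(G)-k\}$, which handles the boundary case $\ell(G)=1$ and yields $\ell(L^{\ell(G)-1}(G))\le 1$, hence $t_1(G)\le\ell(G)$ as claimed.
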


One of the purposes of this section is to determine, for any
positive integer $k$, the best possible bounds for
$t_k(G)$ and to investigate whether being $k$-triangular is line graph stable.

\subsection{A Formula to Compute $\widetilde d(G)$\label{sect_d}}

Recall that $\widetilde d(G) = \min\{i:\delta(L^i(G))\geq 3\}$, which is defined in (\ref{d3}).
Define
\begin{equation}\label{ell-i}
\begin{aligned}
& \ell_1(G) = \max\{|E(P)|: P \mbox{ is a divalent $(1,3)$-path of $G$}\},
\\
& \ell_2(G) = \max\{|E(P)|: P \mbox{ is a divalent $(1,t)$-path of $G$, where $t \ge 4$}\},
\\
& \ell_3(G) = \max\{|E(P)|: P \mbox{ is a divalent $(s,t)$-path of $G$, where $s, t\ge 3$}\},
\end{aligned}
\end{equation}
and \[\ell_0(G)=\max\{\ell_1(G)+1,\ell_2(G),\ell_3(G)-1\}.\]
In  \cite{KnNi03}, it is claimed that ``It is easy to see
$\widetilde d(G)=\ell_0(G)$."
However, there exists an infinite family of graphs each of which shows that this claim might be incorrect.
Let $\B = \{T: T$ is a tree with $V(T) = D_1(T) \cup D_3(T)\}$. 
For each $G \in \B$, we have $\ell_1(G)=\ell_3(G)=1$ and $\ell_2(G)=0$. Direct computation indicates
that $\widetilde d(G)=3 >\ell_0(G)$. See Figure \ref{fig1} for an illustration.

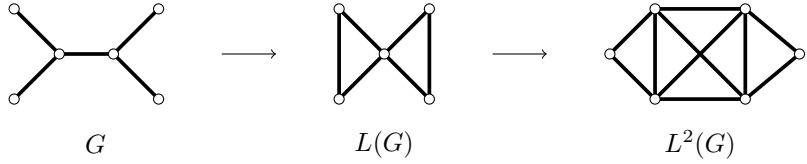
\begin{figure}[th]
\centering
\begin{tikzpicture}
[point/.style = {draw, circle, fill=white, inner sep = 0.3ex},scale=.6]

\draw[double={black}](0,0)--(1,1);
\draw[double={black}](0,2)--(1,1);
\draw[double={black}](1,1)--(2.2,1);
\draw[double={black}](2.2,1)--(3.2,0);
\draw[double={black}](2.2,1)--(3.2,2);

\draw[->] (4.6,1) -- (5.8,1);

\node (origin) at (0,0) [point]{};
\node (origin) at (0,2) [point]{};
\node (origin) at (1,1) [point]{};
\node (origin) at (2.2,1) [point]{};
\node (origin) at (3.2,0) [point]{};
\node (origin) at (3.2,2) [point]{};
\node at (1.8,-1) {$G$};

\draw[double={black}](7.2,0)--(8.2,1);
\draw[double={black}](7.2,2)--(8.2,1);
\draw[double={black}](7.2,0)--(7.2,2);
\draw[double={black}](8.2,1)--(9.2,0);
\draw[double={black}](8.2,1)--(9.2,2);
\draw[double={black}](9.2,2)--(9.2,0);

\draw[->] (10.6,1) -- (11.8,1);

\node (origin) at (7.2,0) [point]{};
\node (origin) at (7.2,2) [point]{};
\node (origin) at (8.2,1) [point]{};
\node (origin) at (9.2,0) [point]{};
\node (origin) at (9.2,2) [point]{};
\node at (8.2,-1) {$L(G)$};

\draw[double={black}](13.2,1)--(14.2,0);
\draw[double={black}](13.2,1)--(14.2,2);
\draw[double={black}](14.2,2)--(14.2,0);
\draw[double={black}](16.2,2)--(14.2,2);
\draw[double={black}](16.2,0)--(14.2,2);
\draw[double={black}](16.2,0)--(14.2,0);
\draw[double={black}](16.2,2)--(14.2,0);
\draw[double={black}](16.2,0)--(16.2,2);
\draw[double={black}](16.2,0)--(17.4,1);
\draw[double={black}](16.2,2)--(17.4,1);

\node (origin) at (13.2,1) [point]{};
\node (origin) at (14.2,0) [point]{};
\node (origin) at (14.2,2) [point]{};
\node (origin) at (16.2,0) [point]{};
\node (origin) at (16.2,2) [point]{};
\node (origin) at (17.4,1) [point]{};
\node at (15.2,-1) {$L^2(G)$};

\end{tikzpicture}
\caption{A member $G \in \B$ and its iterated line graphs.}
\label{fig1}
\end{figure}

Thus what would be the correct formula to compute $\widetilde d(G)$ becomes a question to be answered.
Before presenting our answer to it, we need some notation.
Let $U=\{v\in V(G): |N_G(v)|=1\}$ and $F=\bigcup_{v\in U}E_G(v)$.

\begin{lemma}\label{mg}
Let $G\in\G$ be a graph with $\delta(G)\leq 2$, $\widetilde d=\widetilde d(G)$ and $\ell_0=\ell_0(G)$.
The formula below computes $\widetilde d$:
\begin{equation}\label{e_m}
\widetilde d=
\left\{
\begin{array}{ll}
\max\{\ell_0,3\}, & \mbox{ if $|E_G(v)\cap F|=2$ for some $v\in D_3(G)$};
\\
\ell_0, & \mbox{ otherwise.}
\end{array} \right.
\end{equation}
\end{lemma}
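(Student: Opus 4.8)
\emph{The approach.} The plan is to reduce every statement about $\delta(L^i(G))$ to a statement about divalent paths, via the identity $d_{L(H)}(uv)=d_H(u)+d_H(v)-2$ and Lemma~\ref{(s,t)path}. This identity gives $\delta(L^{i+1}(G))=\big(\min\{d(u)+d(v): uv\in E(L^i(G))\}\big)-2$, so $\delta(L^{i+1}(G))\ge 3$ holds exactly when no edge of $L^i(G)$ has endpoint-degree sum at most $4$, and it shows that the minimum degree, once it reaches $3$, never falls back; hence $\widetilde d$ is a genuine threshold. Read in the forward direction, the identity is the converse of Lemma~\ref{(s,t)path}: advancing one line-graph level turns a divalent $(s,t)$-path of length $r$ into a divalent $(s,t)$-path of length $r-1$ with the same end degrees, and the same bookkeeping applies to closed divalent paths (divalent cycles carrying a single vertex of degree $\ge 3$), whose shortest instance is a triangle with two degree-$2$ vertices, a closed $(3,3)$-path of length $3$. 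A point I will stress throughout is that these closed paths feed only $\ell_3$, not $\ell_1$ or $\ell_2$.

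\emph{Lower bound.} I would push the three extremal divalent paths forward and record the last level carrying a vertex of degree at most $2$. A longest divalent $(1,3)$-path becomes a single $(1,3)$-edge at level $\ell_1-1$, whose image has degree $2$ at level $\ell_1$, so $\widetilde d\ge\ell_1+1$; a longest divalent $(1,t)$-path with $t\ge4$ still has a degree-$1$ endpoint at level $\ell_2-1$, so $\widetilde d\ge\ell_2$; and a longest divalent $(s,t)$-path with $s,t\ge3$, open or closed, still has an interior degree-$2$ vertex at level $\ell_3-2$, so $\widetilde d\ge\ell_3-1$. Hence $\widetilde d\ge\ell_0$. For the exceptional term, if some $v\in D_3(G)$ satisfies $|E_G(v)\cap F|=2$, the two edges of $v$ meeting $U$ become mutually adjacent degree-$2$ vertices of $L(G)$, and the edge joining them is a degree-$2$ vertex of $L^2(G)$; thus $\delta(L^2(G))\le 2$ and $\widetilde d\ge 3$. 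Together these give the lower bound matching \eqref{e_m}.

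\emph{Upper bound.} For the reverse inequality I would assume that $L^D(G)$ has a vertex $z$ with $d(z)\le 2$, where $D$ is the claimed value, and follow the maximal divalent structure $S$ through $z$. If $S$ is not a triangle, then it is not contained in a $K_3$, so the pull-back of Lemma~\ref{(s,t)path} (and its closed-path analogue, proved identically) produces a divalent path of $G$ with the same end degrees and length $|S|+D$; inspecting the patterns $(1,3)$, $(1,\ge4)$ and $(\ge3,\ge3)$ forces $\ell_1$, $\ell_2$ or $\ell_3$ to exceed what $D\ge\ell_0$ permits, a contradiction (the $(1,1)$ pattern would make $G$ a path). Otherwise $z$ lies in a triangle; writing $z$ as an edge $xy$ of $L^{D-1}(G)$ with $d(x)+d(y)=4$, the pattern $\{d(x),d(y)\}=\{1,3\}$ pulls back to a $(1,3)$-path of length $D$ and again contradicts $D\ge\ell_0$, while the pattern $d(x)=d(y)=2$ leaves a triangle with two degree-$2$ vertices in $L^{D-1}(G)$.

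\emph{Main obstacle.} The crux is to control this last object. I would prove the structural dichotomy that a triangle with two degree-$2$ vertices in $L^{k}(G)$ arises either from three edges of $L^{k-1}(G)$ meeting at a common degree-$3$ vertex with two degree-$1$ neighbours, or from a triangle of $L^{k-1}(G)$; the latter forces that triangle to be an all-degree-$2$ component, hence $G$ a cycle, which is excluded. Combined with the elementary but decisive fact that \emph{no line graph has a degree-$3$ vertex with two degree-$1$ neighbours}, this confines two-degree-$2$ triangles to the levels $k=0$ and $k=1$: those at level $0$ live in $G$ itself and, read as closed $(3,3)$-paths, already force $\ell_3\ge3$ and hence $\ell_0\ge2$, so a residual triangle at level $D-1=0$ would need $D=1<\ell_0$; those at level $1$ occur precisely when some $v\in D_3(G)$ has $|E_G(v)\cap F|=2$, so a residual triangle at level $D-1=1$ would need $D=2$ while the condition makes $D=\max\{\ell_0,3\}\ge3$. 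In either case $D$ excludes the residual triangle, completing the contradiction. Carrying out this dichotomy cleanly---separating the divalent cycles of $G$, which feed $\ell_3$, from the ``cherry'' at a degree-$3$ vertex of $G$, which produces the additive jump to $3$---is the step I expect to demand the most care, together with the verification that closed divalent paths are invisible to $\ell_1$ and $\ell_2$.
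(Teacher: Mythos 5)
Your proposal follows essentially the same route as the paper: the lower bound by pushing the three extremal divalent paths forward (plus the cherry computation for the $\max\{\ell_0,3\}$ clause), and the upper bound by assuming $\delta(L^D(G))\le 2$, pulling the offending divalent structure back via Lemma~\ref{(s,t)path}, and resolving the residual two-degree-$2$ triangle by a preimage dichotomy in which claw-freeness of line graphs forbids cherries above level $0$, the all-degree-$2$ triangle forces $G\notin\G$, a level-$0$ triangle feeds $\ell_3\ge 3$, and a level-$0$ cherry triggers the $|E_G(v)\cap F|=2$ condition. For \emph{simple} $G$ this is correct and matches the paper's proof step by step (you are in fact more explicit than the paper about closed divalent paths counting toward $\ell_3$, which the paper's own ``$\ell_3\ge 3$'' step tacitly presupposes).

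The genuine gap is that the lemma is stated for arbitrary $G\in\G$, and the paper's standing convention permits parallel edges, while your argument is silently restricted to simple graphs. Since line graphs are simple, multigraph phenomena occur exactly at level $0$, i.e., in $G$ itself --- precisely where your two key computations break. The identity $d_{L(H)}(uv)=d_H(u)+d_H(v)-2$ becomes $d_H(u)+d_H(v)-2-(\mu(u,v)-1)$ for an edge of multiplicity $\mu$, so at level $D-1=0$ a degree-$2$ vertex of $L(G)$ in a triangle need not satisfy $d(x)+d(y)=4$: a parallel edge with $d(x)=3$, $d(y)=2$ also qualifies. Correspondingly, the preimage of a triangle in $L(G)$ can be $J_1$ or $J_2$, not only $K_3$ or $K_{1,3}$, and your dichotomy clause ``degree-$3$ vertex with two degree-$1$ neighbours'' misses the $J_1$ configuration: a degree-$2$ vertex $w$ with $N_G(w)=\{v\}$ joined to $v$ by two parallel edges --- note the paper defines $U$ by $|N_G(w)|=1$ rather than by degree $1$ exactly to catch this. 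The paper's proof devotes explicit cases to $L^{-1}(H)\cong J_1,J_2$ and $L^{-2}(H)\cong J_1,J_2$; in your write-up these cases simply do not exist. The patch is routine and resolves the same way ($J_2$ forces $G\cong J_2\notin\G$; a $J_1$ parallel edge yields $v\in D_3(G)$ with $|E_G(v)\cap F|=2$, so $D\ge 3$ while this branch needs $D\le 2$), and the same multiplicity correction is needed even in your lower-bound computation for the exceptional clause, where the corrected count $3+2-2-1=2$ happens to rescue your conclusion. A minor slip of the same flavor: in your triangle-from-triangle case, $L^{k-1}(G)\cong K_3$ gives $G\in\{K_3,K_{1,3},J_1,J_2\}$, not only a cycle, though all of these are excluded from $\G$, so the conclusion stands.
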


\begin{proof}
Let $m$ be the right-hand side of (\ref{e_m}) and let $\ell_i = \ell_i (G)$ for each $i\in \{1, 2, 3\}$.
Then $m\le\widetilde d$ by definitions of $\widetilde d$ and line graphs. Now, it suffices to show that $\delta(L^m(G))\geq 3$. We assume that
$\delta(L^m(G))\leq 2$ to seek a contradiction.

If $\delta(L^m(G))=1$, then $L^m(G)$ has a divalent $(1,t)$-path of length $r$ where $t\geq 3$. By Lemma \ref{(s,t)path},
$G$ has a divalent $(1,t)$-path of length $r+m$.
If $t=3$, then $m+1\leq m+r\leq \ell_1\leq m-1$, a contradiction;
if $t>3$, then $m+1\leq m+r\leq \ell_2\leq m$, which is also a contradiction.

Then, $\delta(L^m(G))=2$. Pick $u\in D_2(L^m(G))$.
If $u$ is not in any triangles of $L^m(G)$, then $u$ is in a divalent $(s',t')$-path of length $r'\geq 2$ in $L^m(G)$ that is not in a $K_3$, where $s'\ge 3$ and $t'\ge 3$.
It follows that $G$ has a divalent $(s',t')$-path of length $r'+m$ by Lemma \ref{(s,t)path}, which shows that $2+m\leq r'+m\leq \ell_3\leq m+1$, a contradiction.
Thus, $u\in V(H)$ where $H\cong K_3$ is a subgraph of $L^m(G)$.
By the definition of line graphs, $L^{-1}(H)$ is isomorphic to one member of $\{K_3, K_{1,3}, J_1, J_2\}$.
Let $u=xy\in E(L^{-1}(H))$.

When $L^{-1}(H)\cong K_{1,3}$, as $d(u)=2$, we have $\ell_1(L^{m-1}(G))\geq 1$. By Lemma \ref{(s,t)path}, $\ell_1\geq 1+(m-1)=m\ge \ell_1+1$, a contradiction.

When $L^{-1}(H)\cong J_1$ or $J_2$, as there is no parallel edges in line graphs, $m=1$.
If $L^{-1}(H)\cong J_2$, then $G\cong J_2$ as $d(u)=2$, contradicting the definition of $\G$.
Then, $L^{-1}(H)\cong J_1$. If $u=xy$ is one of the parallel edges of $J_1$, then one of end vertices of $u$, say $x$, of degree 3 in $G$ satisfies $|E_G(x)\cap F|=2$, which implies $m\ge 3$ by (\ref{e_m}).
It is a contradiction with $m=1$.

When $L^{-1}(H)\cong K_3$, we have $d(x)=d(y)=2$ and $\ell_3\geq 3$ as $d(u)=2$.
If $m=1$, as $\ell_3\geq 3$, then $1=m\geq \ell_3-1\geq 2$, a contradiction. So, $m\ge 2$.
Note that $L^{-2}(H)$ is isomorphic to one member of $\{K_3, K_{1,3}, J_1, J_2\}$.
If $L^{-2}(H)\cong K_3$ or $J_2$, then $L^{m-2}(G)\cong G\cong K_3$ or $J_2$, respectively, as $d(x)=d(y)=2$. It contradicts $G\in \G$.
Now, $L^{-2}(H)$ is isomorphic to one member of $\{K_{1,3}, J_1\}$.
Since $d(x)=d(y)=2$ as well as line graphs are claw-free and contain no parallel edges, it shows that $m=2$.
As $d(x)=d(y)=2$, $\{x,y\}\subseteq F$ and there is a common end vertex of edges $x$ and $y$ of degree three, which shows $m\ge 3$ by (\ref{e_m}). It contradicts the fact we got before that $m=2$.
\end{proof}

\subsection{The $k$-Triangular Index}

Before establishing the bounds for $t_k(G)$, we need some lemmas.

\begin{theorem}[Niepel, Knor and \v{S}olt\'{e}s, Lemma 1(1) of \cite{NiKS96}]\label{lowerbound}
Let $G$ be a simple graph with $\delta(G)\geq 3$. Then, $\delta(L^i(G))\geq 2^i(\delta(G)-2)+2$ for each integer $i\geq 0$.
\end{theorem}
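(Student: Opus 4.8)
The plan is to reduce the statement to the single-step inequality $\delta(L(H))\ge 2\delta(H)-2$ and then iterate by induction on $i$. The starting point is the elementary edge-degree identity in line graphs: if $H$ is a simple graph and $e=uv\in E(H)$, then the edges adjacent to $e$ are precisely the $d_H(u)-1$ edges incident with $u$ other than $e$, together with the $d_H(v)-1$ edges incident with $v$ other than $e$. Because $H$ is simple, there is no edge incident with both $u$ and $v$ apart from $e$ itself, so these two families are disjoint and
\[
d_H(e)=d_{L(H)}(e)=\bigl(d_H(u)-1\bigr)+\bigl(d_H(v)-1\bigr)=d_H(u)+d_H(v)-2.
\]
First I would use this to record the one-step bound: for any edge $e=uv$ we have $d_H(u)\ge\delta(H)$ and $d_H(v)\ge\delta(H)$, whence $d_{L(H)}(e)\ge 2\delta(H)-2$; taking the minimum over all vertices of $L(H)$ gives $\delta(L(H))\ge 2\delta(H)-2=2(\delta(H)-2)+2$.

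Next I would run the induction on $i$. The base case $i=0$ is just the identity $\delta(L^0(G))=\delta(G)=2^0(\delta(G)-2)+2$. For the inductive step, assume $\delta(L^i(G))\ge 2^i(\delta(G)-2)+2$. Since line graphs are simple, the one-step bound applies to $H=L^i(G)$, giving
\[
\delta(L^{i+1}(G))\ge 2\,\delta(L^i(G))-2\ge 2\bigl(2^i(\delta(G)-2)+2\bigr)-2=2^{i+1}(\delta(G)-2)+2,
\]
which closes the induction.

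I do not expect a genuine obstacle: the whole argument rests on the edge-degree identity and a clean doubling recursion. The only points requiring a word of care are that the edge-degree identity uses simplicity of $H$ (guaranteed here because $G$ is simple by hypothesis and every iterate $L^j(G)$ is a line graph, hence simple), and that each iterate actually has edges so that the recursion is meaningful. The latter is automatic, since $\delta(G)\ge 3$ forces $\delta(L^i(G))\ge 4$ for all $i\ge 1$, so no $L^i(G)$ is edgeless.
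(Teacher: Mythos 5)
Your proof is correct: the edge-degree identity $d_{L(H)}(e)=d_H(u)+d_H(v)-2$ (valid since each iterate is simple), the resulting one-step bound $\delta(L(H))\ge 2(\delta(H)-2)+2$, and the doubling induction together give exactly the claimed inequality. The paper itself offers no proof --- it quotes this as Lemma 1(1) of the cited Niepel--Knor--\v{S}olt\'{e}s reference --- and your argument is the standard one underlying that result, so there is no substantive divergence to report.
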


By the definition of line graphs, if $G$ is a regular graph, then for each integer $i\geq 0$, we always have
$\delta(L^i(G)) = 2^i(\delta(G)-2)+2$, and so the lower bound in Theorem \ref{lowerbound}
is best possible in this sense.

\begin{lemma} \label{delta-2}
Let $G \in \G$ be a simple graph with $\delta=\delta(G)$. Each of the following holds for each integer $i > 0$.\\
(i) If $\delta\ge 3$, then $L^i(G)$ is $(2^{i-1}(\delta-2))$-triangular.\\
(ii) If $\delta\le 2$, then $L^{\widetilde d+i}(G)$ is $(2^{i-1}(\delta_0-2))$-triangular where
$\delta_0=\delta(L^{\widetilde d(G)}(G))$. In particular, $L^{\widetilde d+i}(G)$ is $2^{i-1}$-triangular.
\end{lemma}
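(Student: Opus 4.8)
The plan is to reduce both parts to a single one-step counting fact about line graphs and then combine it with the degree-growth estimate of Theorem~\ref{lowerbound}. The fact I would isolate first is the following: \emph{if $H$ is a simple graph with $\delta(H)\ge 3$, then $L(H)\in\T_{\delta(H)-2}$, i.e. every edge of $L(H)$ lies in at least $\delta(H)-2$ distinct triangles.} To prove it, take an arbitrary edge $\epsilon$ of $L(H)$. By the definition of the line graph, $\epsilon$ joins two vertices $e,f$ of $L(H)$ that correspond to two adjacent edges of $H$; since $H$ is simple, these two edges meet in exactly one vertex $u$, so I may write $e=uv$ and $f=uw$ with $v\ne w$. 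Every remaining edge $g=uz$ incident with $u$ is adjacent in $H$ to both $e$ and $f$, hence in $L(H)$ the vertex $g$ is adjacent to both $e$ and $f$, so $\{e,f,g\}$ is a triangle through $\epsilon$. Distinct choices of $g$ yield distinct third vertices and therefore distinct triangles, and the number of such $g$ is $d_H(u)-2\ge\delta(H)-2$. (One may also use the possible triangle through the edge $vw$, but this is not needed.) This establishes the claim.

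For part (i), fix $i>0$ and apply the claim to the graph $L^{i-1}(G)$. This graph is simple, since line graphs are always simple, and by Theorem~\ref{lowerbound} (applied with exponent $i-1\ge 0$ to $G$, using $\delta\ge 3$) we have $\delta(L^{i-1}(G))\ge 2^{i-1}(\delta-2)+2\ge 3$, so the claim applies. It gives $L^i(G)=L(L^{i-1}(G))\in\T_{\delta(L^{i-1}(G))-2}$. Since $\delta(L^{i-1}(G))-2\ge 2^{i-1}(\delta-2)$ by the same instance of Theorem~\ref{lowerbound}, and since membership in $\T_{k'}$ trivially implies membership in $\T_{k}$ for every $k\le k'$, I conclude that $L^i(G)$ is $(2^{i-1}(\delta-2))$-triangular, as desired.

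For part (ii), I would run exactly the same two-ingredient argument but with base graph $H=L^{\widetilde d}(G)$, which is simple and satisfies $\delta(H)=\delta_0\ge 3$ by the definition (\ref{d3}) of $\widetilde d$. Applying the claim to $L^{i-1}(H)$ and Theorem~\ref{lowerbound} to $H$ yields that $L^i(H)=L^{\widetilde d+i}(G)$ is $(2^{i-1}(\delta_0-2))$-triangular; here the index accounting $L^i(L^{\widetilde d}(G))=L^{\widetilde d+i}(G)$ is what ties the two levels together. The concluding ``in particular'' statement is then immediate, because $\delta_0\ge 3$ forces $\delta_0-2\ge 1$ and hence $2^{i-1}(\delta_0-2)\ge 2^{i-1}$. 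The only genuinely substantive step is the triangle-counting claim, and the point to watch there is the use of simpleness of $H$: it guarantees that $e$ and $f$ share a single vertex $u$, so that the $d_H(u)-2$ edges at $u$ really do produce pairwise distinct triangles. Everything else is bookkeeping with Theorem~\ref{lowerbound} and the monotonicity of $k$-triangularity in $k$.
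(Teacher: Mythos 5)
Your proof is correct and takes essentially the same approach as the paper: the paper also first establishes the one-step fact that $L(G)$ is $(\delta-2)$-triangular (there phrased via $L(G)[E_G(u)]\cong K_{d(u)}$, which is exactly your explicit count of the $d(u)-2$ edges $g=uz$), then combines it with Theorem~\ref{lowerbound} to get $\delta(L^{i-1}(G))\ge 2^{i-1}(\delta-2)+2\ge 3$ for part (i), and proves part (ii) by applying (i) to $L^{\widetilde d(G)}(G)$ with $\delta_0\ge 3$. Your version merely makes explicit two points the paper leaves implicit, namely the pairwise distinctness of the triangles and the monotonicity of $\T_k$ in $k$.
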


\begin{proof}
Let $e_1e_2\in E(L(G))$ be an arbitrary edge in $L(G)$. Then there exists a vertex $u\in V(G)$
such that $\{e_1,e_2\}\subset E_G(u)$.
Suppose $\delta\ge 3$. In general, as $L(G)[E_G(u)]\cong K_{d(u)}$, the edge $e_1e_2$ lies in at least $d(u)-2\geq \delta-2\ge 1$ distinct triangles.
It means that $L(G)$ is $(\delta-2)$-triangular.
By Theorem \ref{lowerbound}, for each integer $i>0$, $\delta(L^{i-1}(G))\geq 2^{i-1}(\delta-2)+2\geq 3$.
It follows that $L^i(G)$ is $(2^{i-1}(\delta-2))$-triangular and (i) is proved.

To show (ii), as $\delta_0\geq 3$, it follows by (i)
that $L^{\widetilde d+i}(G)=L^i(L^{\widetilde d}(G))$ is $(2^{i-1}(\delta_0-2))$-triangular.
\end{proof}

\begin{theorem}\label{kindex}
Let $k\ge 2$ be an integer and $G\in \G$ be a simple graph with $\delta = \delta(G)$ and $\widetilde d=\widetilde d(G)$. Each of the following holds.
\\
(i) Being $k$-triangular is line graph stable.
\\
(ii)
\begin{equation} \label{tk}
t_k(G)\le\left\{
\begin{aligned}
& \widetilde d+1+\lceil \lg k\rceil, &&\mbox{ if }\delta\le 2;\\
& 1+\left\lceil\lg\frac{k}{\delta-2}\right\rceil, && \mbox{ if } 3\le\delta\le k+1;\\
& 1, &&{\rm otherwise}.
\end{aligned}
\right.
\end{equation}
Moreover, the equality holds for sufficiently large $k$ when $\delta\le k+1$.
\end{theorem}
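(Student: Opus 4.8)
The plan is to handle the two parts separately: (i) by a direct triangle-lifting argument, and (ii) by invoking Lemma~\ref{delta-2} for the upper bound and a matching local triangle count for sharpness.

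For part (i) I would show directly that $G\in\T_k$ forces $L(G)\in\T_k$; note that invoking only $\delta(G)\ge k+1$ with Lemma~\ref{delta-2}(i) would give merely $(k-1)$-triangularity, so a sharper step is needed. Since line graphs are simple, an arbitrary edge $e_1e_2$ of $L(G)$ comes from two distinct edges $e_1=uv_1$ and $e_2=uv_2$ of $G$ meeting at a single vertex $u$, with $v_1\neq v_2$. A triangle of $L(G)$ through $e_1e_2$ is precisely a common neighbour of $e_1,e_2$ in $L(G)$, that is, an edge $f\notin\{e_1,e_2\}$ of $G$ incident with both. Because $G$ is $k$-triangular, $uv_1$ lies in at least $k$ triangles $uv_1w$; for each such $w$ I would read off a common neighbour of $e_1,e_2$, namely $uw$ when $w\neq v_2$ and the edge $v_1v_2$ when $w=v_2$. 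These are pairwise distinct (the edges $uw$ are distinct and none equals $v_1v_2$, which is not incident with $u$), so $e_1e_2$ lies in at least $k$ triangles. As $e_1e_2$ was arbitrary, $L(G)\in\T_k$, proving (i).

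For the upper bound in (ii) I would use Lemma~\ref{delta-2} case by case. If $\delta\ge k+2$, then Lemma~\ref{delta-2}(i) with $i=1$ makes $L(G)$ be $(\delta-2)$-triangular and $\delta-2\ge k$ gives $L(G)\in\T_k$, so $t_k(G)\le 1$. If $3\le\delta\le k+1$, Lemma~\ref{delta-2}(i) makes $L^i(G)$ be $(2^{i-1}(\delta-2))$-triangular, and choosing the least $i$ with $2^{i-1}(\delta-2)\ge k$ gives $t_k(G)\le 1+\lceil\lg\frac{k}{\delta-2}\rceil$. If $\delta\le 2$, I would first pass to $L^{\widetilde d}(G)$, which has minimum degree $\delta_0\ge 3$; Lemma~\ref{delta-2}(ii) then makes $L^{\widetilde d+i}(G)$ be $2^{i-1}$-triangular, and the least $i$ with $2^{i-1}\ge k$ yields $t_k(G)\le\widetilde d+1+\lceil\lg k\rceil$.

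For the sharpness statement, the key tool is the exact count of triangles through an edge of a line graph: if $H$ is simple and $e_1e_2\in E(L(H))$ comes from the cherry $v_1\,u\,v_2$ of $H$, then $e_1e_2$ lies in exactly $(d_H(u)-2)+[\,v_1v_2\in E(H)\,]$ triangles. Minimising over cherries, I would show that once $\delta(L^{m-2}(G))\ge 3$ every minimum-degree vertex of $L^{m-1}(G)$, being an edge $ab$ of $L^{m-2}(G)$ with $d(a),d(b)\ge 3$, carries an edge at $a$ and an edge at $b$ with distinct second endpoints, hence two non-adjacent neighbours; thus the minimum number of triangles through an edge of $L^m(G)$ is exactly $\delta(L^{m-1}(G))-2$. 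Consequently, for $m\ge\widetilde d+2$, we get $L^m(G)\in\T_k$ if and only if $\delta(L^{m-1}(G))\ge k+2$. To realise equality I would take $G$ to be $\delta$-regular with $\delta\ge 3$, so that Theorem~\ref{lowerbound} holds with equality and $\delta(L^i(G))=2^i(\delta-2)+2$; then for all large $k$ the threshold above gives $t_k(G)=1+\lceil\lg\frac{k}{\delta-2}\rceil$, and an analogous example whose $\widetilde d$-th iterate is $3$-regular settles the case $\delta\le 2$.

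The main obstacle I anticipate is the lower bound, specifically controlling $\delta(L^{m-1}(G))$ precisely. The local triangle count reduces sharpness to the growth rate of the minimum degree under iteration, and while Theorem~\ref{lowerbound} pins this down with equality for regular graphs, for a general $G\in\G$ the minimum degree can grow strictly faster than $2^i(\delta-2)+2$ (the cheapest edge need not join two minimum-degree vertices), which would make $t_k(G)$ strictly smaller than the stated bound. This is exactly why the equality is asserted only as best possibility ``for sufficiently large $k$'', realised through the regular examples where Theorem~\ref{lowerbound} is tight; verifying that those examples also satisfy the non-adjacent-neighbours condition at the relevant level is the remaining technical check.
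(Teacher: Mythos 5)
Your proposal is correct, and on the upper bounds it follows essentially the paper's route: both arguments derive the three cases of (\ref{tk}) from Lemma \ref{delta-2}, taking the least $i$ with $2^{i-1}(\delta-2)\ge k$ when $\delta\ge 3$, and first passing to $L^{\widetilde d}(G)$ and using the ``in particular $2^{i-1}$-triangular'' clause when $\delta\le 2$. For (i) your argument is a genuine (and slightly cleaner) variant: you lift the $k$ triangles $uv_1w$ through $e_1$ to $k$ pairwise distinct common neighbours of $e_1,e_2$ in $L(G)$ (the edges $uw$, together with $v_1v_2$ when $w=v_2$), whereas the paper splits into $d(x)\ge k+2$, where the other edges at $x$ already give $k$ triangles, and $d(x)=k+1$, where it first shows $G[N_G(x)]$ is complete; both are sound. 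The substantive divergence is the \emph{moreover} clause. The paper claims literal equality for every $G$, writing $2^{t-2}(\delta-2)<k\le 2^{t-1}(\delta-2)$ (and the analogue $2^{t-\widetilde d-2}<k\le 2^{t-\widetilde d-1}$) ``by the definition of $t$''; but Lemma \ref{delta-2} provides only \emph{lower} bounds on triangularity, so minimality of $t$ yields only the left inequality, and indeed literal equality can fail: if the minimum degree grows faster than $2^{i}(\delta-2)+2$ --- e.g.\ when $\delta_0=\delta(L^{\widetilde d}(G))>3$, or for non-regular $G$ whose minimum-degree vertices do not lie on the cheapest edges --- then $t_k(G)$ falls strictly below the stated value, exactly as you observe. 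Your repair is the right one and supplies what the paper's proof is missing: the exact count $(d_H(u)-2)+[\,v_1v_2\in E(H)\,]$ of triangles of $L(H)$ through an edge with centre $u$ is correct for simple $H$, your non-adjacent-neighbours argument at a minimum-degree vertex is valid once $\delta(L^{m-2}(G))\ge 3$ (with $u=ab$ and $d(a),d(b)\ge 3$ one can pick edges $ac$ and $bd$ with $c\ne d$, giving a cherry with non-adjacent ends), so the minimum triangle count in $L^m(G)$ is exactly $\delta(L^{m-1}(G))-2$; regular graphs then attain equality in Theorem \ref{lowerbound} and hence in (\ref{tk}) for all large $k$, and for $\delta\le 2$ a concrete witness of your ``analogous example'' is the full subdivision of a cubic graph, which has $\widetilde d=1$ and $3$-regular line graph. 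In short, you prove the theorem with the equality statement read as sharpness of the bound --- the only reading under which it is true --- at the cost of a longer argument, while the paper's shorter derivation of equality is, strictly speaking, incomplete.
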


\begin{proof}
(i) Suppose $G\in\G$ is a simple $k$-triangular graph for given $k\ge 2$. Then $\delta(G)\ge k+1 \ge 3$.
Pick an edge $e_1e_2\in E(L(G))$. To show that $L(G)\in \T_k$, it is enough to prove that $e_1e_2$ lies in at least $k$ distinct triangles in $L(G)$.
Let $x$ be the common vertex of $e_1$ and $e_2$ in $G$,
and $X$ be the set of all edges adjacent with both edges $e_1$ and $e_2$, that is, $X=E_G(e_1)\cap E_G(e_2)$.
If $d(x)\ge k+2$, then $|X|\ge k$.
It means that $e_1e_2$ lies in at least $k$ distinct triangles in $L(G)$.
Now, we consider that $d(x)=k+1$. Since $G\in \T_k$ is a simple graph, $G[N_G(x)]$ is a complete graph and then $e_1e_2$ lies in at least $k$ distinct triangles in $L(G)$.

(ii) Let $t=t_k(G)$. First, we consider the situation when $\delta\le 2$.
As $k\ge 2$, by the definition of $\widetilde d$, we have $t\geq \widetilde d$.
If $t<\widetilde d+2$, then $t< \widetilde d+1+\lceil\lg k\rceil$ as $k\geq 2$.
Assume next that $k$ is so large that $t\geq \widetilde d+2$. As $L^{t}(G)\in\T_k$ but
$L^{t-1}(G)\notin \T_k$, by Lemma \ref{delta-2}(ii),
$2^{t-\widetilde d-2}<k\leq 2^{t-\widetilde d-1}$. Then algebraic manipulation leads to
$t-\widetilde d-2<\lg k\leq t-\widetilde d-1$, which means that $\lceil\lg k\rceil=t-\widetilde d-1$.
Hence we conclude that $t= \widetilde d+1+\lceil\lg k\rceil$.

Now, we suppose that $\delta\ge 3$. 
If $\delta\ge k+2$, then $L(G)\in\T_{\delta-2}$ by Lemma \ref{delta-2}(i), which implies that $L(G)\in\T_{k}$ and then $t\le 1$.

If $\delta\le k+1$ and $t\ge 2$, then, by Lemma \ref{delta-2}(i), for each integer $i > 0$,
$L^i(G)$ is $(2^{i-1}(\delta-2))$-triangular. So $2^{t-2}(\delta-2)< k \leq 2^{t-1}(\delta-2)$ by the definition of $t=t_k(G)$.
It follows that $t=1+\left\lceil\lg{\frac{k}{\delta-2}}\right\rceil$. Then, $t\le 1+\left\lceil\lg{\frac{k}{\delta-2}}\right\rceil$ when $3\le\delta\le k+1$.
\end{proof}

\section{Proof of Theorem \ref{mainthm}}

An \emph{elementary subdivision} of a graph $G$ at an edge $e=uv$ is a graph
$G(e)$ obtained from $G-e$ by adding a new vertex $v_e$ and two new edges $uv_e$ and $v_ev$.
For a subset $X\subseteq E(G)$, we define $G(X)$ to be the graph obtained from $G$ by elementarily
subdividing every edge of $X$.

\begin{lemma}\label{kl}
For an integer $k>1$, if $G\in\G$ is a $k$-triangular simple graph and
$X\subset E(G)$ with $|X|=s$ where $1\leq s< k$, then $G-X\in\T_{k-s}$.
\end{lemma}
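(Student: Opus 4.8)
The plan is to show directly that after deleting the $s$ edges in $X$, every remaining edge of $G-X$ still lies in at least $k-s$ triangles. The key observation is that deleting a single edge can destroy at most one triangle through any given surviving edge, so the defect accumulates additively. More precisely, I would fix an arbitrary edge $f \in E(G-X)$ and count how many of its $k$ (or more) triangles in $G$ survive in $G-X$.

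First I would set up the counting. Since $G \in \T_k$, the edge $f$ lies in a set $\mathcal{T}_f$ of at least $k$ distinct triangles in $G$. A triangle $T \in \mathcal{T}_f$ fails to be a triangle of $G-X$ precisely when $T$ uses at least one edge of $X$; since $f \notin X$ (as $f \in E(G-X)$), the offending edge must be one of the two \emph{other} edges of $T$. For a fixed edge $x \in X$, I would bound the number of triangles in $\mathcal{T}_f$ that contain $x$: because $f$ and $x$ are two distinct edges, any triangle containing both $f$ and $x$ is determined by the single remaining vertex, and in a \emph{simple} graph two edges $f$ and $x$ can be completed to a triangle by at most one common vertex unless they are disjoint, in which case no triangle contains both. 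In all cases at most one triangle of $\mathcal{T}_f$ is killed by a given $x$.

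Summing over the $s$ edges of $X$, at most $s$ triangles of $\mathcal{T}_f$ are destroyed, so at least $|\mathcal{T}_f| - s \ge k - s$ triangles survive. Each surviving triangle uses only edges of $E(G) \setminus X = E(G-X)$, hence is a genuine triangle of $G-X$ through $f$. Since $f$ was arbitrary and $1 \le s < k$ guarantees $k-s \ge 1 > 0$, every edge of $G-X$ lies in at least $k-s$ triangles, which is exactly the assertion $G-X \in \T_{k-s}$.

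The one point that needs care — and the place where the simple-graph hypothesis is genuinely used — is the claim that a single $x \in X$ destroys at most one triangle through $f$. This is where I would spell out the case analysis: if $x$ shares exactly one endpoint with $f$, the two edges span three vertices and lie in a unique triangle (if any), determined by closing the path $x f$; if $x$ shares both endpoints with $f$ this cannot happen in a simple graph; and if $x$ and $f$ are vertex-disjoint, no triangle contains both. I expect this step to be the main (and essentially only) obstacle, since the rest is a clean additive counting argument; everything else follows immediately once the ``one edge kills at most one triangle'' bound is established.
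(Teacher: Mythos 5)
Your proof is correct and takes essentially the same approach as the paper: both fix a surviving edge, take its $k$ triangles in $G$, and use simplicity to argue that each of the $s$ deleted edges can destroy at most one of them, leaving at least $k-s$ intact triangles in $G-X$. The paper packages the key step as the observation that the $k$ triangles through $e$ are pairwise edge-disjoint apart from $e$ itself (i.e., $E(C^e_i\cap C^e_j)=\{e\}$), which is equivalent to your ``two distinct edges lie in at most one common triangle'' case analysis.
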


\begin{proof}
Pick $e\in E(G-X)$. Since $G\in\T_k$, edge $e$ lies in at least $k$ distinct triangles in $G$,
say $C_1^e,C_2^e,\ldots, C_k^e$. As $E(C^e_i\cap C^e_j)=\{e\}$ for each $\{i,j\}\subseteq [1,k]$ and $|X|=s<k$,
there exist $k-s$ such triangles $C^e_{i'}$ where $i'\in [1,k]$ such that $E(C^e_{i'})\cap X=\emptyset$.
It follows that $G-X\in\T_{k-s}$.
\end{proof}

\begin{lemma} \label{st}
Given two non-negative integers $s$ and $t$. If $G\in\G$ is
a $(s+t+1)$-triangular simple graph, then $G$ is $(s,t)$-supereulerian.
\end{lemma}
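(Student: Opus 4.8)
The plan is to reduce the entire statement to a single collapsibility assertion about an auxiliary graph and then read off the required trail. Fix disjoint edge sets $X,Y\subseteq E(G)$ with $|X|\le s$ and $|Y|\le t$; I must exhibit a spanning closed trail of $G-Y$ traversing every edge of $X$. Let $G_1=G-Y$, and let $G_2=G_1(X)$ be the graph obtained by elementarily subdividing each edge of $X$, introducing a degree-$2$ vertex $v_e$ for every $e\in X$. The guiding observation is that if $G_2$ is supereulerian, then any spanning closed trail $T$ of $G_2$ must visit each $v_e$, and since $v_e$ has degree $2$ it must use both edges incident with $v_e$; suppressing the vertices $v_e$ converts $T$ into a spanning closed trail of $G_1=G-Y$ containing all of $X$. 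Thus it suffices to prove that $G_2$ is collapsible, every collapsible graph being supereulerian.

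To set up the reductions I would first record that a connected $k$-triangular graph is $(k+1)$-edge-connected: given a cut edge $e=uv$ of an edge cut, each of the $k$ triangles through $e$ has its third vertex on one side or the other, contributing a further cut edge, and distinct third vertices yield distinct such edges. Applied to $G\in\T_{s+t+1}$ this gives $(s+t+2)$-edge-connectivity, so deleting the at most $s+t$ edges of $X\cup Y$ leaves a connected graph; in particular $G_1=G-Y$ and $H:=G_1-X=G-(X\cup Y)$ are connected. Two applications of Lemma \ref{kl} then control the triangular structure: removing the $\le t$ edges of $Y$ from $G\in\T_{s+t+1}$ gives $G_1\in\T_{s+1}$, and removing the $\le s$ edges of $X$ from $G_1$ gives $H\in\T_{1}$, the degenerate cases $Y=\emptyset$ or $X=\emptyset$ being immediate. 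Since every edge of the connected triangular graph $H$ lies in a $3$-cycle, $H$ is collapsible by Theorem \ref{collapsible}(i).

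The heart of the argument, and the step I expect to be the main obstacle, is to deduce collapsibility of $G_2$ from that of $H$ by contracting. Here $H$ is a connected collapsible subgraph of $G_2$: its edges are exactly the edges of $G_1$ lying outside $X$, which survive unchanged under subdivision, while the only vertices of $G_2$ outside $H$ are the subdivision vertices $v_e$. Contracting $H$ to a single vertex $h$, each $v_e$ keeps its two incident edges, both of which now join $v_e$ to $h$; hence $G_2/H$ is a connected multigraph in which $h$ is joined to each $v_e$ by a pair of parallel edges. Every edge of $G_2/H$ therefore lies in a cycle of length $2$, so $G_2/H$ is collapsible by Theorem \ref{collapsible}(i). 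As $H$ is collapsible, Theorem \ref{collapsible}(ii) now yields that $G_2$ is collapsible.

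Finally I would assemble the pieces: $G_2$ collapsible implies $G_2$ supereulerian, and the suppression argument of the first paragraph turns a spanning closed trail of $G_2$ into one of $G-Y$ through $X$, so $G$ is $(s,t)$-supereulerian. When $s=t=0$ no subdivision is needed, since then $G\in\T_1$ is already connected and triangular, hence collapsible and supereulerian. The points demanding care are the boundary applications of Lemma \ref{kl}, which require a positive number of deleted edges and parameter $k>1$, and the claim that a spanning closed trail must enter every degree-$2$ subdivision vertex through both of its edges; the genuinely new idea is the contraction that collapses the triangular remainder and thereby turns each subdivided edge of $X$ into a digon.
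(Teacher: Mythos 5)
Your proposal is correct and follows essentially the same route as the paper: form the auxiliary graph $(G-Y)(X)=G(X)-Y$, use Lemma \ref{kl} to get $H=G-(X\cup Y)\in\T_1$ hence collapsible via Theorem \ref{collapsible}(i), contract $H$ so each subdivision vertex $v_e$ sits on a digon, apply Theorem \ref{collapsible}(i) and (ii) to conclude $G(X)-Y$ is collapsible and thus supereulerian, and suppress the degree-$2$ vertices $v_e$ to recover a spanning closed trail of $G-Y$ through $X$. Your only addition is the explicit $(s+t+2)$-edge-connectivity argument guaranteeing that $H$ is connected (a hypothesis of Theorem \ref{collapsible}(i) that the paper's proof leaves implicit), which is a sound and worthwhile refinement rather than a different method.
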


\begin{proof}
For any $X,Y\subset E(G)$ with $X\cap Y=\emptyset$, $|X|=s_1\leq s$ and $|Y|\leq t$. Then $|X\cup Y|\leq s+t$.
Let $H=G-(X\cup Y)$.
By Lemma \ref{kl}, $H\in\T_1$. It follows that $H$ is collapsible by Theorem \ref{collapsible}(i).
Let $X=\{x_1,x_2,\ldots, x_{s_1}\}$. Then $V(G(X))=V(G)\cup \{v_{x_1},v_{x_2},\ldots, v_{x_{s_1}}\}$. Note that
$G(X)-Y-\{v_{x_1},v_{x_2},\ldots, v_{x_{s_1}}\}=H$ is collapsible. Since every edge of $(G(X)-Y)/H$ lies in a cycle of length 2,
which implies that $(G(X)-Y)/H$ is collapsible by Theorem \ref{collapsible}(i). It indicates that $G(X)-Y$ is collapsible by
Theorem \ref{collapsible}(ii) as $H$ is collapsible. Then $G(X)-Y$ is supereulerian,
which means that $G(X)-Y$ has a spanning eulerian subgraph $J$.
Note that $d_{G(X)-Y}(v_{x_i})=2$ for each $i\in[1,s_1]$. Then subgraph $J$ contains
all edges incident with some $v_{x_i}$, which means that $G-Y$ has a spanning
eulerian subgraph $J'$ containing $X$, and so $G$ is $(s,t)$-supereulerian.
\end{proof}

\begin{proof}[of Theorem \ref{mainthm}]
Combine Theorem \ref{old1}(ii), Theorem \ref{kindex}(ii) and Lemma \ref{st}, and then we complete the proof of it.
\end{proof}

\acknowledgements
The authors would like to thank the two anonymous referees for their comments, which helped us to improve the manuscript.



\begin{thebibliography}{99}
\bibitem{BBCA08} L. Becchetti, P. Boldi, C. Castillo, and A. Gionis, Efficient semi-streaming algorithms for local triangle counting in massive graphs, in Proceedings of the 14th ACM SIGKDD international conference on Knowledge discovery and data mining, (2008) 16-24.

\bibitem{Bein68}
L.W. Beineke, On derived graphs and digraphs, in H. Sachs et al. (Ed.), Beitraege zur Graphentheorie, Teubner-Verlag, Leipzig (1968) 17-23.

\bibitem{BrVe87}
H. J. Broersma and H.J. Veldman, 3-connected line graphs of triangular
graphs are panconnected and 1-Hamiltonian, J. Graph Theory, 11 (1987) 399-407.

\bibitem{BoST77} F. T. Boesch, C. Suffel, and R. Tindell,
The spanning subgraphs of eulerian graphs, J. Graph Theory, 1 (1977) 79-84.

\bibitem{BoMu08}
J. A. Bondy and U. S. R. Murty, Graph Theory, Springer, New York, 2008.

\bibitem{Catl92} P. A. Catlin, Super-Eulerian graphs, a survey, J. Graph Theory, 16 (1992) 177-196.

\bibitem{Catl88} P. A. Catlin, A Reduction Method to Find Spanning Eulerian Subgraphs, J. Graph Theory, 12 (1988) 29-45.

\bibitem{CIJS90} P. A. Catlin, T. Iqbalunnisa, T.N. Janakiraman, and N. Srinivasan,
Hamilton cycles and closed trails in iterated line graphs, J. Graph Theory, 14 (1990) 347-364.


\bibitem{Char68} G. Chartrand, On Hamiltonian Line-Graphs,
Transactions of the American Mathematical Society, 134 (3) (1968) 559-566.



\bibitem{CLXY09}
Z. H. Chen, H.-J. Lai, L. Xiong, H. Yan, and M. Zhan,
Hamilton-Connected Indices of Graphs, Discrete Math., 309 (2009) 4819-4827.

\bibitem{ClWo83} L. K. Clark and N. C. Wormald, Hamiltonian like
indices of graphs, Ars Combinatoria, 15 (1983) 131-148.



\bibitem{Ha69}
F. Harary, Graph Theory, Addison-Wesley, London, 1969.

\bibitem{HaNa65} F. Harary and C. St. J. A. Nash-Williams, On
eulerian and Hamiltonian graphs and line graphs, Canad. Math.
Bull., 8 (1965) 701-709.

\bibitem{Have01} F. Havet, Channel assignment and multicoloring of the induced subgraphs of the triangular lattice, Discrete Math., 233 (2001) 219-231.

\bibitem{KnNi03}
M. Knor and L'. Niepel, Connectivity of iterated line graphs, Discrete Applied Math., 125 (2003) 255-266.

\bibitem{LaiH88} H.-J. Lai,
On the Hamiltonian index, Discrete Math., 69 (1988) 43-53.

\bibitem{LaSh07} H.-J. Lai and Y. Shao, Some Problems Related to Hamiltonian Line Graphs,
AMS/IP Stud. Adv. Math, 39 (2007) 149-159.

\bibitem{LaSY13} H.-J. Lai, Y. Shao, and H. Yan, An Update on Supereulerian Graphs, WSEAS Transactions on Mathematics, 12 (2013) 926-940.

\bibitem{LeLW07} L. Lei, X. Li, and B. Wang, On $(s,t)$-Supereulerian Locally Connected Graphs,  International Conference on Computational Science, (2007) 384-388.

\bibitem{LeLi08} L. Lei and X. Li, A note on the connectivity of generalized prisms, Journal of Southwest China Normal University (Natural Science Edition), 33 (2008) 1-3.

\bibitem{LLWL10} L. Lei, X. Li, B. Wang, and H.-J. Lai, On $(s,t)$-supereulerian graphs in locally highly connected graphs, Discrete Math., 310 (2010) 929-934.

\bibitem{LeRU14} J. Leskovec, A. Rajaraman, and J. D. Ullman, Mining social-network graphs, Mining of Massive Datasets  (325-383), Cambridge University Press, Cambridge, 2014.

\bibitem{Mo84} A. Moon, The graphs $G(n,k)$ of the Johnson schemes are unique for $n\ge20$, J. Combinatorial Theory, Series B, 37 (1984) 173-188.

\bibitem{NiKS96}
L'. Niepel, M. Knor, and L'. \v{S}olt\'{e}s, Distances in iterated line graphs, Ars Combinatoria, 43 (1996) 193-202.

\bibitem{RyWX11}
Z. Ryjáček, G. J. Woeginger, and L. Xiong, Hamiltonian index is NP-complete, Discrete Applied Math., 159 (2011) 246-250.



\bibitem{ZELS08} L. Zhang, E. Eschen, H.-J. Lai, and Y. Shao,
The $s$-Hamiltonian Index,  Discrete Math., 308 (2008) 4779-4785.

\bibitem{ZSCXZ12}
L. Zhang, Y. Shao, G. Chen, X. Xu, and J. Zhou, $s$-Vertex Pancyclic index, Graphs and Combinatorics, 28 (2012) 393-406.
\end{thebibliography}
\end{document}